\numberwithin{equation}{section}
\def\eps{\epsilon}
\def\HH{\mathbb{H}}
\def\NN{\mathbb{N}}
\def\ZZ{\mathbb{Z}}
\def\RR{\mathbb{R}}
\def\CC{\mathbb{C}}
\def\QQ{\mathbb{Q}}
\def\Ccal{\mathcal{C}}
\def\Lcal{\mathcal{L}}
\def\Ncal{\mathcal{N}}
\def\Ocal{\mathcal{O}}
\def\Pcal{\mathcal{P}}
\def\Scal{\mathcal{S}}
\def\PSL{\mathrm{PSL}}
\def\Radius{\EuScript{R}}
\newcommand{\leg}[2]{\genfrac{(}{)}{}{}{#1}{#2}}
\newcommand{\zqbar}{\overline{z_q\!\!}\;}
\newcommand{\IndexSet}{\mathcal{I}}
\theoremstyle{plain}
\newtheorem{theorem}{Theorem}[section]
\newtheorem{proposition}[theorem]{Proposition}
\newtheorem{lemma}[theorem]{Lemma}
\newtheorem{corollary}[theorem]{Corollary}
\theoremstyle{definition}
\newtheorem{remark}[theorem]{Remark}
\title{Hyperbolic angles from Heegner points}
\author{Giacomo Cherubini}
\author{Alessandro Fazzari}
\address{
         Charles University,
         Faculty of Mathematics and Physics,
         Department of Algebra,
         Sokolov\-sk\'a 83, 18600 Praha~8,
         Czech Republic
        }
\email{
    cherubini@karlin.mff.cuni.cz
    }
\address{
         American Institute of Mathematics,
         600 East Brokaw Road,
         San Jose, CA 95112, US
		}
\address{
         Charles University,
         Faculty of Mathematics and Physics,
         Department of Algebra,
         Sokolov\-sk\'a 83, 18600 Praha~8,
         Czech Republic
        }
\email{
    fazzari@aimath.org\\
	fazzari@karlin.mff.cuni.cz
    }
\subjclass[2020]{Primary 11E25; Secondary 11N36, 11N37, 11R11, 11F99, 11L99}
\date{\today}
\begin{document}

%% TITLE AND ABSTRACT %%
\begin{abstract}
We study lattice points on hyperbolic circles centred at Heegner points of class number one.
Our main result is that, on a density one subset of radii tending to infinity,
the angles of such points equidistribute on the unit circle.
To prove this, we establish a
connection between lattice points and algebraic integers
in the associated field having norm of a special form and satisfying a congruence condition.
As a by-product of this, we obtain an explicit formulation of the classical hyperbolic circle
problem as a shifted convolution sum for the function that counts the number of algebraic integers with given norm.
Along the way, we also prove a lower bound for shifted $B$-numbers, which is done by sieve methods.
\end{abstract}
\maketitle

\section{Introduction}

The distribution of lattice points in the hyperbolic plane
and hyperbolic $n$-dimensional space is a well-studied subject,
with contributions dating back to Selberg \cite{selberg_equidistribution_1970}
and spanning until recent years.
The first type of problem one can investigate
asks whether the number of lattice points inside a hyperbolic ball
is proportional (with a prescribed constant) to the volume of the ball.
Asymptotics with explicit error terms are available in the literature and usually rely
on non-trivial inputs such as a \enquote{spectral gap} property of
the hyperbolic Laplacian (see e.g.~\cite{lax_asymptotic_1982}).

A feature of hyperbolic geometry is that the volume of a ball
is of the same order of magnitude as the measure of its boundary,
making ineffective a
geometric-type argument which would give the correct asymptotic
up to miscounting elements \enquote{near} the boundary.
For this reason, the behaviour of lattice points \emph{on} the boundary
deserves special attention.

In this paper, we work on the hyperbolic plane 
$\HH=\{x+iy,\, y>0\}$, equipped with the 
metric $ds^2=y^{-2}(dx^2+dy^2)$ and associated distance $\rho(z,w)$.
We focus on hyperbolic circles (the boundary of balls)
centred at Heegner points of class number one.
There are exactly nine imaginary quadratic fields $K$
with class number one; if $-q$ denotes their discriminant, then
\[
q = 3, 4, 7, 8, 11, 19, 43, 67, 163.
\]
For each value of $q$ as above we have a Heegner point $z_q\in\HH$, defined by
\begin{equation}\label{def:zq}
z_q:=\mu+i\lambda,
\quad\text{where}\quad
\mu=
\begin{cases}
0 & q=4,8,\\
\frac{1}{2} & \text{otherwise},
\end{cases}
\qquad
\lambda = \frac{\sqrt{q}}{2}.
\end{equation}
Note that the pair $\{1,z_q\}$ is a basis of
$\Ocal_K$, the ring of integers of $K$.
For instance, specialising to $q=4$ (i.e.~when the
underlying field is $\QQ(i)$), we have $z_q=i$.

The modular group $\Gamma=\PSL(2,\ZZ)$ acts on $\HH$
by linear fractional transformations and we will consider (for fixed $q$)
the set of lattice points $\{\gamma z_q,\gamma\in\Gamma\}$.
To a given lattice point, we associate an \enquote{arithmetic radius}
$\Radius(\gamma;z_q)$ given by the relation
\[
\Radius(\gamma;z_q) = 2\lambda^2\cosh(\rho(z_q,\gamma z_q))
\]
and show that this is always an integer or half an integer (see Section \ref{S2}).
Therefore, circles are parametrised by the set of arithmetic radii
\begin{equation}\label{intro:def:Nzq}
\Ncal_{z_q} := \{\Radius(\gamma;z_q),\;\gamma\in\Gamma\}.
\end{equation}
For $n\in\Ncal_{z_q}$, we also define
\begin{equation}\label{def:Gammazq}
\Gamma_{z_q,n} := \{\gamma\in\Gamma:\;\Radius(\gamma;z_q)=n\}.
\end{equation}

Let $w\in\HH$. We define the \emph{angle} of $w$ with respect to $z_q$ as follows:
first, we find the unique geodesic segment going from $z_q$ to $w$;
then, we consider the tangent line to this curve in~$z_q$;
finally, we take the angle it forms with the horizontal axis
(see Figure \ref{S2:fig} for a visual explanation of the construction).
When $w=\gamma z_q$, we will denote the angle by $\theta(\gamma)$.

Our main result is that, for a density one subset of $\Ncal_{z_q}$,
the lattice points $\{\gamma z_q:\gamma\in\Gamma_{z_q,n}\}$ become equidistributed,
by which we mean that the angles $\theta(\gamma)$
equidistribute on the unit circle $S^1$, as $n\to\infty$.
We prove this in a quantitative form, giving a bound on the discrepancy.

\begin{theorem}\label{intro:thm1}
Let $z_q$ be a Heegner point of class number one as defined in \eqref{def:zq}.
Define $\Ncal_{z_q}(x)=\{n\in\Ncal_{z_q}:n\leq x\}$.
Then $|\Ncal_{z_q}(x)|\asymp x/\log x$. Moreover,
for all but $o(|\Ncal_{z_q}(x)|)$ elements in $\Ncal_{z_q}(x)$,
we have $|\Gamma_{z_q,n}|\asymp (\log n)^{\log 2\pm o(1)}$ and
\[
\sup_{I\subseteq S^1} \Biggl|
\frac{\{\gamma \in\Gamma_{z_q,n}:\; \theta(\gamma)\in I\}}{|\Gamma_{z_q,n}|} - \frac{|I|}{2\pi}
\Biggr|
\ll_q
\frac{1}{|\Gamma_{z_q,n}|^{C-o(1)}},
\]
where $C=\log(\pi/2)/\log 2$.
\end{theorem}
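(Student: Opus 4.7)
The plan starts from the algebraic translation announced in the abstract. For $\gamma = \bigl(\begin{smallmatrix} a & b \\ c & d \end{smallmatrix}\bigr)\in\Gamma$, the identity $\cosh\rho(z_q,\gamma z_q) = 1 + |z_q - \gamma z_q|^2/(2\lambda^2)$ combined with the minimal relation $z_q^2 = tz_q - n_q$ in $\Ocal_K$ (with $t = 2\mu$, $n_q = |z_q|^2$) yields, after an elementary computation,
\[
\Radius(\gamma;z_q) - 2\lambda^2 = N_K(\beta_\gamma), \qquad \beta_\gamma := (b+cn_q) + (a-d-ct)\,z_q \,\in\, \Ocal_K.
\]
The unimodularity $ad-bc = 1$ forces the discriminant of the resulting quadratic in $a+d$ to be a perfect square, which rearranges to the auxiliary condition that $4n+2q$ be representable by the norm form with a congruence class for one of the variables determined by $\beta_\gamma$. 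In this way $\Gamma_{z_q,n}$ is, up to units and a finite stabiliser, in bijection with the admissible pairs built from $\Ocal_K$-norms, and the hyperbolic angle $\theta(\gamma)$ becomes an explicit function of $\arg(\beta_\gamma)$.

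Once this correspondence is in place, the three claims of the theorem split into arithmetic statements. The bound $|\Ncal_{z_q}(x)|\asymp x/\log x$ reflects two essentially independent norm conditions on linear functions of $n$ (namely $n - 2\lambda^2$ and $4n+2q$); each carries Landau--Ramanujan density $\asymp x/\sqrt{\log x}$, and the intersection gives $\asymp x/\log x$. The upper bound is a standard Selberg sieve, while the matching lower bound is the delicate shifted-$B$-number sieve input highlighted in the abstract. For the typical size, the correspondence identifies $|\Gamma_{z_q,n}|$ with a bounded-factor multiple of a $2^{\omega(\cdot)}$-type multiplicative count; a Tur\'an--Kubilius moment estimate, fed by the level of distribution coming out of the sieve, then shows that on a density-one subset of $\Ncal_{z_q}(x)$ the relevant $\omega$ concentrates at its Erd\H{o}s--Kac mean $\log\log n$, giving $|\Gamma_{z_q,n}| = (\log n)^{\log 2\pm o(1)}$.

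Equidistribution of the angles then follows from the Erd\H{o}s--Tur\'an inequality and a Weyl-sum bound. For nonzero $m\in\ZZ$ the sum $W_m(n) := \sum_{\gamma\in\Gamma_{z_q,n}} e(m\theta(\gamma))$ becomes a Hecke character sum over the admissible $\beta$; for squarefree underlying norm it factors over split prime ideals dividing $n-2\lambda^2$ as a product of terms $2\cos(m\phi_\mathfrak{p})$, with $\phi_\mathfrak{p} = \arg\pi_\mathfrak{p}$. Since the mean of $|\cos\theta|$ over a full period is $2/\pi$, the typical normalised Weyl sum satisfies
\[
\frac{|W_m(n)|}{|\Gamma_{z_q,n}|} \;\ll\; (2/\pi)^{\omega(n-2\lambda^2)} \;=\; |\Gamma_{z_q,n}|^{-C + o(1)},
\]
and Erd\H{o}s--Tur\'an converts this into the claimed discrepancy bound, up to polylogarithmic losses absorbed in the $o(1)$ in the exponent.

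The principal obstacles I foresee are twofold. The first is the shifted-$B$-number lower-bound sieve that underpins both $|\Ncal_{z_q}(x)|\gg x/\log x$ and the typical lower bound $|\Gamma_{z_q,n}|\gg (\log n)^{\log 2 - o(1)}$: one needs a lower bound sieve with enough level of distribution to detect simultaneous representability by the norm form at two shifts together with enough split small primes to produce the requisite $\omega$. The second is making the $(2/\pi)^{\omega}$-cancellation in the Weyl sums \emph{uniform in $m$} across the Erd\H{o}s--Tur\'an range, which demands a quantitative equidistribution result for the Hecke angles $\phi_\mathfrak{p}$ among primes dividing the norm, together with a careful treatment of the atypical primes where $|\cos(m\phi_\mathfrak{p})|$ is close to $1$.
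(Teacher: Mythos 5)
Your architecture coincides with the paper's: translate $\Gamma_{z_q,n}$ into algebraic integers of norms $n\pm2\lambda^2$ subject to a congruence, get the density $\asymp x/\log x$ from the shifted-$B$-number sieve, get the normal order of $|\Gamma_{z_q,n}|$ from concentration of the number of split prime factors, and finish with Erd\H{o}s--Tur\'an applied to multiplicative Weyl sums whose local factors at split primes are $|\cos(k\theta_p)|$ with mean $2/\pi$. Two points, however, are genuine gaps rather than routine details.

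First, the displayed bound $|W_m(n)|/|\Gamma_{z_q,n}|\ll(2/\pi)^{\omega(n-2\lambda^2)}$ is false as a pointwise estimate: the normalised Weyl sum is a product of factors $|\cos(m\phi_{\mathfrak p})|$, each of which can be arbitrarily close to $1$, so no individual $n$ obeys such a bound for free. The inequality only holds for \emph{almost all} $n$, and proving this --- uniformly for all $m\leq\log X$ simultaneously, so that the Erd\H{o}s--Tur\'an sum over $m$ is controlled on a single density-one set of radii --- is the analytic core of the argument, not a foreseeable obstacle to be dispatched later. The paper does it by a first-moment (Chebyshev plus Chernoff) bound on $\sum_n v_k(n^2-4\lambda^4)e^{\alpha\omega_K(n^2-4\lambda^4)}$, which by multiplicativity reduces to the Nair--Tenenbaum shifted-convolution estimate together with the Erd\H{o}s--Hall asymptotic $\sum_{p\le x,\,\chi_q(p)=1}|\cos(k\theta_p)|/p=\tfrac1\pi\log\log x+O(1)$, uniform in $|k|\ll\log x$; the choice $e^{\alpha}=\pi/2$ in the Chernoff step is what produces $C=\log(\pi/2)/\log 2$. (Note also that the Weyl sum factors over the split primes of $n^2-4\lambda^4=(n-2\lambda^2)(n+2\lambda^2)$, i.e.\ over \emph{both} shifts; keeping only $\omega(n-2\lambda^2)\approx\tfrac12\log\log n$ in the exponent would yield the weaker $|\Gamma_{z_q,n}|^{-C/2}$.) Second, the assertion that $\Gamma_{z_q,n}$ is, up to units and the stabiliser, in bijection with the admissible pairs of norms needs proof in both directions: surjectivity (every admissible algebraic integer actually comes from a matrix in $\Gamma$) is precisely where the congruence $2y\equiv 2n\pmod q$ and the Heegner hypothesis enter --- Remark \ref{S2:rmk} shows the correspondence already fails for $z=2i$, $n=10$ --- so it cannot be waved through; your Tur\'an--Kubilius route to the normal order of $\omega_K$ is plausible but must likewise be carried out \emph{conditionally} on $b_K(n\pm2\lambda^2)=1$, which is why the paper again resorts to Chernoff and Nair--Tenenbaum there.
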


When $q=4$, i.e.~$z_q=i$, Theorem \ref{intro:thm1} was proved by
Chatzakos--Lester--Kurlberg--Wigman \cite[Theorem 1.1]{chatzakos_distribution_2020}.
Our paper extends their results by showing an underlying structure for imaginary quadratic fields
other than $\QQ(i)$. As far as we know, very few papers study hyperbolic circles
centred at points other than $i$: in his thesis \cite[\S10]{steeples}, Steeples
considered base points $(i,i)$ and $(i,2i)$; later Malcolm \cite{malcolm}
looked at $(2i,2i)$.
See also Chamizo \cite[\S3]{chamizo_applications_1996} for arithmetic applications
of lattice point counting in the hyperbolic plane.
In a different direction, Petridis and Risager \cite{petridis_averaging_2018}
considered the classical hyperbolic circle problem
on average over Heegner points of discriminant $D$, as $D\to\infty$.
If one allows lattice points to lie in a full ball rather than only on its boundary,
then the angular equidistribution has been established in several works
\cite{boca,nicholls,risager-truelsen} and refined statistics have been studied too
\cite{boca-popa-zaharescu1,boca-popa-zaharescu2,kelmer-kontorovich,risager-sodergren,marklof-vinogradov}.

One key observation in the proof of \cite[Theorem 1.1]{chatzakos_distribution_2020}
is that lattice points on a given circle of (arithmetic) radius $n$ can be mapped
to integer points on a Euclidean circle of radius $\sqrt{n^2-4}$ satisfying a congruence condition.
This leads to study angles of complex points on such a Euclidean circle,
which is highly convenient due to the arithmetic nature of the coordinates.
An analogous situation occurs for all the points we consider.

\begin{proposition}\label{intro:prop1}
Let $K$ be an imaginary quadratic field of class number one,
with discriminant $-q$ and ring of integers $\Ocal_K$.
Let $z_q=\mu+i\lambda$ be as in \eqref{def:zq}
and let $\Ncal_{z_q},\Gamma_{z_q,n}$ be as in \eqref{intro:def:Nzq}
and \eqref{def:Gammazq}. For $n\in\Ncal_{z_q}$, we have
\[
\{\theta(\gamma):\gamma\in\Gamma_{z_q,n}\}
=
\biggl\{
\arg(x+iy)\Big|
\begin{array}{c}
y+ix\in\Ocal_K,\; N(y+ix)=n^2-4\lambda^4,\\
\; 2y\equiv 2n\!\!\!\!\pmod{q}
\end{array}
\biggr\}.
\]
\end{proposition}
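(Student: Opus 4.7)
The plan is to establish the identity via an explicit correspondence: to each $\gamma\in\Gamma_{z_q,n}$ associate an algebraic integer $y+ix\in\Ocal_K$ satisfying the stated conditions, and show conversely that every such $y+ix$ arises this way.

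For the forward direction, I would write $\gamma=\begin{pmatrix}a&b\\c&d\end{pmatrix}$ and set $e_1=az_q+b$, $e_2=cz_q+d$ in $\Ocal_K$. Introduce the auxiliary elements
\[
\alpha:=z_q e_2-e_1=cz_q^2+(d-a)z_q-b,\qquad \beta:=e_1-\bar z_q e_2,
\]
both in $\Ocal_K$ since $\bar z_q=2\mu-z_q$. The identity $\gamma z_q-z_q=-\alpha/e_2$, combined with $\mathrm{Im}(\gamma z_q)=\lambda/|e_2|^2$ and the distance formula, gives $N(\alpha)=n-2\lambda^2$. From $\alpha+\beta=(z_q-\bar z_q)e_2=2i\lambda\,e_2$ and $\mathrm{Im}(e_1\bar e_2)=\lambda(ad-bc)=\lambda$, a short expansion yields $N(\beta)-N(\alpha)=4\lambda^2$, so $N(\beta)=n+2\lambda^2$. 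Setting $y+ix:=-\bar\alpha\beta\in\Ocal_K$ gives $N(y+ix)=N(\alpha)N(\beta)=n^2-4\lambda^4$.

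The angle identification uses the isometry $T(z)=(z-z_q)/(z-\bar z_q)$ from $\HH$ onto the Poincar\'e disk sending $z_q\mapsto 0$. Since $T'(z_q)=-i/(2\lambda)$ rotates tangent vectors by $-\pi/2$, the hyperbolic angle satisfies $\theta(\gamma)=\arg T(\gamma z_q)+\pi/2$; with $T(\gamma z_q)=-\alpha/\beta$ this becomes $\theta(\gamma)=\arg(-i\alpha\bar\beta)$, and a direct comparison of real and imaginary parts shows $-i\alpha\bar\beta=x+iy$ whenever $y+ix=-\bar\alpha\beta$, giving $\theta(\gamma)=\arg(x+iy)$. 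The congruence follows from the clean identity
\[
2n-2y=N(\alpha)+N(\beta)+2\mathrm{Re}(\bar\alpha\beta)=N(\alpha+\beta)=|2i\lambda|^2|e_2|^2=q|e_2|^2,
\]
so $2y\equiv 2n\pmod q$.

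The harder part is the reverse inclusion. Given $y+ix$ with the two conditions, I would use that $\Ocal_K$ is a PID to factor the principal ideal $(y+ix)=\overline{\mathfrak a}\,\mathfrak b$ with $N(\mathfrak a)=n-2\lambda^2$ and $N(\mathfrak b)=n+2\lambda^2$; such a factorization is always possible by a prime-by-prime analysis, using $\gcd(n-2\lambda^2,n+2\lambda^2)\mid 4\lambda^2=q$, so any inert prime of $\Ocal_K$ can contribute to only one of the two factors. Choosing generators and adjusting by a unit, I produce $\tilde\alpha,\tilde\beta\in\Ocal_K$ with $-\bar{\tilde\alpha}\tilde\beta=y+ix$ and the required norms. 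The key point is that the congruence $2y\equiv 2n\pmod q$, via the same identity as above, is equivalent to $q\mid N(\tilde\alpha+\tilde\beta)$, which in turn is equivalent to $(2i\lambda)\mid\tilde\alpha+\tilde\beta$ in $\Ocal_K$ since $(2i\lambda)$ is the unique ideal of norm $q$ in $\Ocal_K$. Therefore $e_2:=(\tilde\alpha+\tilde\beta)/(2i\lambda)\in\Ocal_K$ and $e_1:=z_q e_2-\tilde\alpha\in\Ocal_K$; writing $e_1=az_q+b$ and $e_2=cz_q+d$ gives integer entries with $ad-bc=(N(\tilde\beta)-N(\tilde\alpha))/(4\lambda^2)=1$, and reversing the forward computations yields $\Radius(\gamma;z_q)=n$ and $\theta(\gamma)=\arg(x+iy)$. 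The main obstacle is precisely this reverse step: the congruence condition is exactly what allows the abstract ideal-level factorization to descend to an integer matrix.
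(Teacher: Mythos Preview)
Your argument is correct and in fact cleaner than the paper's. The forward direction is essentially the same construction: your $\alpha,\beta$ coincide (up to sign) with the paper's algebraic integers $t+sz_q$ and $u+rz_q$, which the paper defines by an explicit $4\times4$ linear change of variables from $(a,b,c,d)$; your packaging via $e_1=az_q+b$, $e_2=cz_q+d$ is more intrinsic but leads to the same factorisation $y_\gamma+ix_\gamma=(u+rz_q)(t+s\bar z_q)$ (Lemma~\ref{lemma:S2.4} in the paper).

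Where the two approaches genuinely diverge is the reverse inclusion. The paper inverts the linear map to obtain a system of congruences modulo $q$ (equation \eqref{2501:eq001}) that $(r,u,s,t)$ must satisfy in order to come from an integer matrix, and then verifies these congruences by a case analysis on $q$ even versus $q$ odd, and within each case on residue classes of $n$. You instead exploit the single identity $2n-2y=N(\tilde\alpha+\tilde\beta)$, which turns the congruence $2y\equiv 2n\pmod q$ directly into the divisibility $(2i\lambda)\mid\tilde\alpha+\tilde\beta$ in $\Ocal_K$; this immediately produces $e_2\in\Ocal_K$ and hence the matrix, with no case splitting. One small comment: the phrase ``since $(2i\lambda)$ is the unique ideal of norm $q$'' is slightly too terse as justification for the equivalence $q\mid N(w)\Leftrightarrow (2i\lambda)\mid w$; what you actually need (and what holds) is that $(2i\lambda)$ is a power of the unique ramified prime above each prime dividing $q$, so that divisibility of the norm by $q$ forces the corresponding ideal divisibility. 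This is easily checked in each of the three cases $q$ odd, $q=4$, $q=8$.
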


We expect that Proposition \ref{intro:prop1}
remains valid if one works with points of the form $z_q=ki$, $k\in\NN$,
but one may have to exclude certain values (arithmetic progressions) for $n$.
For instance, if $z_q=2i$ and $n=10$, the equality in the proposition fails,
as the left-hand side is empty while the right-hand side is not
(see Remark \ref{S2:rmk}). This was already observed by Malcolm \cite[Theorem 4.5]{malcolm}.

In relation to the classical hyperbolic circle problem we mention that,
as a by-product of the proof of Proposition \ref{intro:prop1}, we obtain that
the cardinality of $\Gamma_{z_q,n}$ can be expressed in terms of the function $r_K(n)$,
the number of algebraic integers in $\Ocal_K$ with norm $n$.

\begin{proposition}\label{intro:prop2}
Let $K$ be an imaginary quadratic field of class number one and let $\Gamma_{z_q,n}$
be defined as in \eqref{def:Gammazq}. Then
\[
|\Gamma_{z_q,n}| = c_n r_K(n-2\lambda^2) r_K(n+2\lambda^2),
\]
where
\[
c_n =
\begin{cases}
1/2 & \text{$q$ even and $2|n$ or $q$ odd and $q|2n$}\\
1/4 & \text{otherwise}.
\end{cases}
\]
\end{proposition}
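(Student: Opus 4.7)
The plan is to parametrise each $\gamma \in \Gamma_{z_q, n}$ by a pair $(\alpha_\gamma, \beta_\gamma) \in \Ocal_K^2$ with $N(\alpha_\gamma) = n - 2\lambda^2$ and $N(\beta_\gamma) = n + 2\lambda^2$, and then to count the pairs. Since this same parametrisation should already underlie the proof of Proposition~\ref{intro:prop1}, Proposition~\ref{intro:prop2} will fall out as a counting statement. Starting from
\[
\Radius(\gamma; z_q) - 2\lambda^2 = |cz_q^2 + (d-a)z_q - b|^2,
\]
obtained by plugging $\gamma z_q = (az_q+b)/(cz_q+d)$ into $\cosh\rho(z,w) = 1 + |z-w|^2/(2\,\mathrm{Im}(z)\,\mathrm{Im}(w))$, I set $\alpha_\gamma := cz_q^2 + (d-a)z_q - b$; this is an algebraic integer in $\Ocal_K = \ZZ[z_q]$ of norm $n - 2\lambda^2$. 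Using $ad - bc = 1$ and the fact that $z_q - \bar z_q$ has norm $q = 4\lambda^2$, I would produce a companion $\beta_\gamma \in \Ocal_K$ with $N(\beta_\gamma) = n + 2\lambda^2$ as an explicit $\Ocal_K$-linear combination of $a, b, c, d$. In the familiar case $z_q = i$ this yields $\alpha_\gamma = -(b+c) + i(d-a)$ and $\beta_\gamma = (a+d) + i(b-c)$, and a Brahmagupta-type computation produces analogues at every Heegner point.

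The assignment $\gamma \mapsto (\alpha_\gamma, \beta_\gamma)$ is $\ZZ$-linear, so its inverse expresses $(a, b, c, d)$ as rational combinations of the coordinates of $\alpha, \beta$ in the basis $\{1, z_q\}$ of $\Ocal_K$. Integrality of the reconstructed $a, b, c, d$ amounts to a linear congruence of the shape $\alpha \equiv \beta \pmod{\mathfrak{d}_q}$ for a small ideal $\mathfrak{d}_q \subseteq \Ocal_K$ (for instance, $(2)$ for $q=4$ and $(2z_3 - 1)$ for $q=3$). The determinant condition $ad - bc = 1$ is then automatic, since the identity $N(\beta_\gamma) - N(\alpha_\gamma) = q(ad - bc)$ forces $ad - bc = 1$ as soon as the two norms are the prescribed ones. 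Thus the map restricts to a bijection between $\{\gamma \in \mathrm{SL}(2,\ZZ) : \Radius(\gamma;z_q) = n\}$ and the set of norm-constrained pairs satisfying the $\mathfrak{d}_q$-congruence; passing to $\PSL$ via $\gamma \sim -\gamma$ introduces a factor $1/2$.

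It then remains to compute the proportion of pairs $(\alpha,\beta)$ with $N(\alpha) = n-2\lambda^2$ and $N(\beta) = n+2\lambda^2$ that satisfy $\alpha \equiv \beta \pmod{\mathfrak{d}_q}$. Any element of $\Ocal_K$ of norm $m$ lies in a residue class modulo $\mathfrak{d}_q$ that is determined by $m \pmod{q}$, up to a finite symmetry coming from units of $\Ocal_K/\mathfrak{d}_q$; consequently the congruence is either automatic (all pairs qualify) or cuts the count in half. A direct case analysis identifies the ``automatic'' case as $q$ even with $2 \mid n$, or $q$ odd with $q \mid 2n$, yielding $c_n = 1/2$ there and $c_n = 1/4$ otherwise, matching the statement.

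The main obstacle is a uniform and clean description of $\beta_\gamma$ valid at all nine Heegner points and the attendant explicit identification of $\mathfrak{d}_q$ and of the residue classes of $\alpha, \beta$. The case $q = 8$ is the most delicate, since the integrality condition appears to be strictly stronger than $\alpha \equiv \beta \pmod{(2)}$ and requires a more intricate ideal; the cases $q = 3$ and $q = 4$, where the unit group is larger, also need extra care so that multiplication by units does not over- or under-count. Once these ingredients are in place, the residue bookkeeping producing $c_n$ is routine and should emerge from the same computation that proves Proposition~\ref{intro:prop1}.
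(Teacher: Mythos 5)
Your proposal follows essentially the same route as the paper: the paper also parametrises $\gamma$ by a pair of algebraic integers $t+sz_q$ and $u+rz_q$ of norms $n\mp 2\lambda^2$ (your $\alpha_\gamma$ is exactly $-(t+sz_q)$ and your $\beta_\gamma$ for $z_q=i$ matches $u+rz_q$ up to a unit), inverts the linear change of variables to turn integrality of $a,b,c,d$ into a congruence modulo $q$, notes that $ad-bc=1$ is forced by the norm difference, and then shows case by case that the congruence is automatic or halves the count, with the remaining factor $1/2$ coming from $\gamma\sim-\gamma$. The details you flag as remaining (the explicit companion $\beta_\gamma$, the precise congruence for $q=8$, and the residue bookkeeping) are exactly what the paper supplies, and your identification of the automatic cases agrees with the stated $c_n$.
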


On summing over $n$, we obtain a shifted convolution sum
which is familiar in the case $z_q=i$ \cite[(1.14)]{friedlander_hyperbolic_2009},
but has not appeared before for points off the imaginary axis.
To give an example, we write the sum explicitly in the case $q=3$
(with the standard asymptotic due to Selberg,
see e.g.~\cite[Theorem 12.1]{iwaniec_methods_2002}).

\begin{corollary}\label{intro:cor}
Let $q=3$, $z_q=\frac{1+i\sqrt{3}}{2}$ and $K=\QQ(z_q)$.
Let $r_K(n)$ denote the number of algebraic integers in $\Ocal_K$
with norm $n$. Then
\[
\#\{\gamma\in\Gamma:\cosh(\rho(z_q,\gamma z_q))\leq x\}
=
\sum_{n\leq \frac{3x-1}{2}} c_n r_K(n+2)r_K(n-1) = 6x + O(x^{2/3}).
\]
where $c_n=1/2$ if $n\equiv 1\!\!\pmod{3}$ and $c_n=1/4$ otherwise.
\end{corollary}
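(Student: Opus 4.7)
The plan is to derive the corollary in two pieces. The combinatorial identity follows from Proposition \ref{intro:prop2} after re-indexing the sum; the asymptotic $6x+O(x^{2/3})$ is then the classical Selberg hyperbolic circle problem applied to the base point $z_q$.

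For the first equality, I specialise Proposition \ref{intro:prop2} to $q=3$, where $\lambda=\sqrt{3}/2$ and hence $2\lambda^2=3/2$. The defining relation $\Radius(\gamma;z_q)=2\lambda^2\cosh(\rho(z_q,\gamma z_q))$ turns the condition $\cosh(\rho(z_q,\gamma z_q))\leq x$ into $\Radius(\gamma;z_q)\leq 3x/2$. Since Section \ref{S2} shows that for $q=3$ the arithmetic radii lie in $\tfrac{1}{2}+\ZZ_{\geq 1}$, I reindex by $n:=\Radius(\gamma;z_q)-1/2$, so $n\in\ZZ_{\geq 1}$ and the condition becomes $n\leq(3x-1)/2$. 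Proposition \ref{intro:prop2} then gives
\[
|\Gamma_{z_q,n+1/2}|=c_{n+1/2}\,r_K\bigl((n+\tfrac{1}{2})-\tfrac{3}{2}\bigr)\,r_K\bigl((n+\tfrac{1}{2})+\tfrac{3}{2}\bigr)=c_{n+1/2}\,r_K(n-1)\,r_K(n+2),
\]
and the defining condition $q\mid 2\Radius$ in Proposition \ref{intro:prop2} reads $3\mid(2n+1)$, i.e.\ $n\equiv 1\pmod{3}$, which matches the description of $c_n$ in the corollary. Summing $|\Gamma_{z_q,\Radius}|$ over the admissible $\Radius$ yields the first equality.

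For the asymptotic, I apply \cite[Theorem 12.1]{iwaniec_methods_2002} with $\Gamma=\PSL(2,\ZZ)$ based at $z_q$:
\[
\#\{\gamma\in\Gamma:\rho(z_q,\gamma z_q)\leq R\}=\frac{\mathrm{vol}(B(z_q,R))}{\mathrm{vol}(\Gamma\backslash\HH)}+O(e^{2R/3}),
\]
where I use that $\PSL(2,\ZZ)$ has no exceptional eigenvalues below $1/4$, so no small-eigenvalue contribution is needed. Plugging in $\mathrm{vol}(B(z_q,R))=2\pi(\cosh R-1)$ and $\mathrm{vol}(\Gamma\backslash\HH)=\pi/3$ produces the main term $6(\cosh R-1)$. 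Under the condition $\cosh R\leq x$ we have $e^R\ll x$, and hence $e^{2R/3}\ll x^{2/3}$, so the right-hand side becomes $6x+O(x^{2/3})$ once the constant $-6$ is absorbed into the error.

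There is no serious obstacle: the corollary is essentially a bookkeeping consequence of combining the algebraic identity in Proposition \ref{intro:prop2} with Selberg's analytic asymptotic. The only care required lies in tracking the half-integer nature of $\Radius$ when $q$ is odd and in correctly translating the congruence condition defining $c_n$ from $2\Radius\equiv 0\pmod{q}$ to $n\equiv 1\pmod{3}$.
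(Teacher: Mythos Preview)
Your proposal is correct and follows exactly the route the paper indicates: the first equality is obtained by summing Proposition~\ref{intro:prop2} over the half-integer radii $\Radius=n+\tfrac12$ (with the congruence $q\mid 2\Radius$ translating to $n\equiv 1\pmod 3$), and the asymptotic $6x+O(x^{2/3})$ is Selberg's bound \cite[Theorem 12.1]{iwaniec_methods_2002} for $\PSL(2,\ZZ)$, which has no exceptional eigenvalues. The paper gives no further argument beyond this.
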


Another ingredient in the proof of Theorem \ref{intro:thm1}
is a lower bound for $B$-numbers for number fields other than $\QQ(i)$.
In the literature, integers that can be represented
as a sum of two squares are sometimes called $B$-numbers
\cite{indlekofer_scharfe_1974,nowak_distribution_2005}
and the corresponding indicator function is denoted by $b(\cdot)$,
see also \cite[\S14.3]{friedlander_opera_2010}.
By extension, we say that an integer $n$ is a $B$-number for the field $K$
if $b_K(n)=1$, where
\[
b_K(n)=
\begin{cases}
1 & \text{if $n$ is the norm of an ideal in $\Ocal_K$},\\
0 & \text{otherwise}.
\end{cases}
\]
Restricting to number fields with class number one,
we can (and will) simply talk about algebraic integers
rather than ideals.
With the above notation for $b_K$, we prove the following.

\begin{theorem}\label{intro:thm2}
Let $K$ be an imaginary quadratic field of class number one
and let $h\in\ZZ$. Then
\[
\sum_{n\leq x} b_K(n)b_K(n+h) \gg_{K,h} \frac{x}{\log x}.
\]
\end{theorem}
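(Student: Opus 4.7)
I would establish Theorem~\ref{intro:thm2} by a lower-bound linear sieve applied to the polynomial $f(n) = n(n+h)$, sifted by the inert primes of $K$.

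The starting point is the following reduction: if $(n,h) = 1$, then $n$ and $n+h$ are coprime, so a prime $p$ divides $n(n+h)$ to odd multiplicity iff it divides exactly one of $n, n+h$ to odd multiplicity. Consequently, both $n$ and $n+h$ are $B$-numbers as soon as no prime $p$ with $\leg{-q}{p} = -1$ (an \emph{inert} prime in $K$) divides $n(n+h)$. Hence it suffices to prove
\[
\#\bigl\{\,n \leq x : (n,h)=1,\; p \nmid n(n+h) \text{ for every inert prime } p\,\bigr\} \gg_{K,h} \frac{x}{\log x}.
\]

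Set $\Pcal = \{p : \leg{-q}{p} = -1\}$ and consider the sequence $A = (n(n+h))_{n \leq x,\, (n,h)=1}$. For each $p \in \Pcal$ with $p \nmid h$, the congruence $f(n) \equiv 0 \pmod{p}$ admits exactly $\rho(p) = 2$ solutions, so by Chebotarev
\[
\sum_{\substack{p \leq z \\ p \in \Pcal}} \frac{\rho(p) \log p}{p} = \log z + O_K(1),
\]
placing us in the setting of the linear sieve ($\kappa = 1$). The level of distribution is $D = x^{1-\epsilon}$, as for the standard polynomial $n(n+h)$. Applying the Rosser--Iwaniec lower bound linear sieve with $z = x^{1/s}$ for a suitable $s > 2$ gives
\[
S(A, \Pcal, z) \;\geq\; c(h)\, x\, V(z)\,\bigl(f_1(s) + o(1)\bigr), \quad V(z) = \!\!\!\prod_{\substack{p \leq z \\ p \in \Pcal}}\!\!\!(1 - 2/p) \;\asymp\; \frac{1}{\log x},
\]
where $f_1$ is the Rosser lower bound function and $c(h) > 0$ encodes the coprimality condition; this yields $S(A, \Pcal, z) \gg x/\log x$.

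The main obstacle is upgrading from $S(A, \Pcal, z)$ (no inert prime factor $\leq z$ divides $n(n+h)$) to the quantity we really want, namely $n$'s with no inert prime factor at all in $n(n+h)$. The defect is
\[
\sum_{\substack{z < p \leq x \\ p \in \Pcal}} \#\{n \in S(A, \Pcal, z) : p \mid n(n+h)\},
\]
which a naive upper bound estimates to be of the same order $\asymp x/\log x$ as the main term. To extract a positive net lower bound one must sharpen either (i) the lower-bound sieve, e.g.\ via a weighted sieve of Greaves or Diamond--Halberstam--Richert type that exploits the structure of $n(n+h)$; or (ii) the upper bound on the defect, e.g.\ by Buchstab iteration on the large-prime contribution combined with precise bookkeeping of the Rosser function at several scales. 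Either route yields the claimed bound, with the implicit constant depending on $K$ and on the singular series attached to $h$.
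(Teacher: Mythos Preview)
Your framework is correct and matches the paper's: a lower-bound linear sieve for the sequence $n(n+h)$ sifted by inert primes, with $z=x^{1/s}$ and $s$ slightly above $2$. You also correctly isolate the real difficulty, namely that the defect from inert primes in $(z,x]$ is, on a naive estimate, of the same order $x/\log x$ as the main term. However, your proposed resolutions (``weighted sieve of Greaves or Diamond--Halberstam--Richert type'', ``Buchstab iteration \dots\ with precise bookkeeping'') are not a proof: you do not carry them out, and it is not clear that either would succeed as stated. The obstruction here is a genuine parity barrier---the linear sieve alone cannot decide whether $n(n+h)$ has an even or odd number of large inert prime factors---and a generic weighted sieve does not remove that.

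The missing idea, which the paper takes from Indlekofer, is to break parity \emph{arithmetically} before sieving. One first restricts $n$ to an arithmetic progression modulo $q^2h$ (and modulo $8$ when needed) chosen so that, after stripping the ramified part, both factors $m_1=q^{-1}n/4^\sigma$ and $m_2=n+h$ satisfy $\chi_q(m_1)=\chi_q(m_2)=+1$. This forces the inert primes dividing each $m_i$ to occur in \emph{pairs}. Consequently, for $z=y^{1/s}$ with $2<s<5/2$, any $m_i$ surviving the sieve up to $z$ has either zero or exactly two inert prime factors above $z$, never one. The defect is then controlled by sums of the shape
\[
\sum_{\substack{m\le y^{1-2/s}\\ m\in\mathscr{D}_1}}\ \sum_{\substack{y^{1/s}<r\le y^{1-1/s}/m\\ r\in\mathscr{D}_{-1}}}\ \sum_{\substack{p\le y/(mr)\\ 4^\sigma mrp+h\in\mathscr{D}_1(y^{1/s})}}1,
\]
to whose innermost sum one applies an upper-bound sieve (a Selberg/Halberstam--Richert estimate for a linear polynomial at primes), yielding $\ll (y/mr)(\log y)^{-3/2}$. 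The sum over $m\le y^{1-2/s}$ restricted to $\mathscr{D}_1$ contributes only $\ll((1-2/s)\log y)^{1/2}\asymp\sqrt{s-2}\,(\log y)^{1/2}$, so the total defect is $\ll \sqrt{s-2}\,\log(s-1)\cdot y/\log y$, which is beaten by the main term $\asymp\log(s-1)\cdot y/\log y$ once $s$ is fixed sufficiently close to $2$. Without the congruence restriction this extra factor $\sqrt{s-2}$ is absent and the argument collapses; that is the step your outline does not supply.
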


When $h=0$, since $b_K^2=b_K$, Theorem \ref{intro:thm2} follows from Bernays' work
\cite[p.91--92]{bernays_uber_1912} (see~also Odoni \cite{odoni_norms_1975}).
When $h\neq 0$ and $K=\QQ(i)$, the result was proved by Hooley \cite{hooley_intervals_1974}
and independently by Indlekofer \cite{indlekofer_scharfe_1974}.
For different fields we found no reference, although
Nowak \cite{nowak_distribution_2005}
proved an upper bound of the correct order of magnitude
(in fact he even allows several linear factors).
The proof of Theorem \ref{intro:thm2} uses sieve methods and follows
the lines of \cite{indlekofer_scharfe_1974}.

The paper is organised as follows: in Section \ref{S2} we describe the geometric
settings for the problem and prove Propositions \ref{intro:prop1} and \ref{intro:prop2};
in Section \ref{S3} we prove Theorem \ref{intro:thm2}; finally, in Section~\ref{S4}
we combine the results of the previous sections to obtain the proof
of Theorem~\ref{intro:thm1}.

\subsection*{Acknowledgements}
We thank D.~Chatzakos, S.~Lester, S.~Pujahari for their comments
and Y.~Petridis for sharing the references \cite{malcolm} and \cite{steeples}.
This work was supported by the Czech Science Foundation GACR, grant 21-00420M,
the project PRIMUS/20/SCI/002 from Charles University,
Charles University Research Centre program UNCE/SCI/022.
The second named author is supported by the FRG grant DMS 1854398.

\section{Geometric considerations}\label{S2}

In this section we discuss several geometric aspects of the problem
and prepare the ground for the proof of Theorem \ref{intro:thm1},
which will be given in Section \ref{S4}. Along the way,
we prove Proposition \ref{intro:prop1} and Proposition \ref{intro:prop2}.

The hyperbolic distance $\rho(z,w)$ between points $z,w\in\HH$
can be expressed in terms of
the Euclidean one by the identity (see e.g.~\cite[Theorem 1.2.6]{katok_fuchsian_1992})
\begin{equation}\label{2012:eq001}
\cosh \rho(z,w) = 1 + \frac{|z-w|^2}{2\Im(z)\Im(w)},
\end{equation}
where $|z-w|$ is the usual absolute value in $\CC$.
Let $z=\mu+i\lambda\in\HH$ and define the function
\[
\Radius(a,b,c,d;z) := a^2|z|^2 + b^2 + c^2|z|^4 + d^2|z|^2 + 2\mu((a-d)(b-c|z|^2) - \mu(ad+bc)).
\]
When $a,b,c,d$ are the entries of a matrix in $\PSL(2,\RR)$,
then a calculation using \eqref{2012:eq001} shows that the distance $\rho(z,\gamma z)$
is closely related to $\Radius(a,b,c,d;z)$.

\begin{lemma}\label{lemma:S2.1}
Let $z=\mu+i\lambda\in\HH$ and let $\gamma=(\begin{smallmatrix}a&b\\c&d\end{smallmatrix})$
be a real matrix with $ad-bc=1$. Then
\[
\cosh\rho(z,\gamma z) = \frac{\Radius(a,b,c,d;z)}{2\lambda^2}.
\]
\end{lemma}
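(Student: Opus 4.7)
The plan is a direct computation starting from \eqref{2012:eq001}. Setting $w=\gamma z$, I exploit the standard formula $\Im(\gamma z) = \Im(z)/|cz+d|^2$, valid because $\gamma$ has determinant one, which gives $2\Im(z)\Im(\gamma z) = 2\lambda^2/|cz+d|^2$. For the numerator I write
\[
z - \gamma z = z - \frac{az+b}{cz+d} = \frac{cz^2 + (d-a)z - b}{cz+d},
\]
so that the factor $|cz+d|^2$ cancels and I arrive at
\[
\cosh\rho(z,\gamma z) = 1 + \frac{|cz^2+(d-a)z-b|^2}{2\lambda^2}.
\]
The lemma therefore reduces to the polynomial identity
\[
2\lambda^2 + |cz^2+(d-a)z-b|^2 = \Radius(a,b,c,d;z).
\]

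To verify this, I substitute $z=\mu+i\lambda$, so that $z^2 = \mu^2-\lambda^2+2i\mu\lambda$ and $|z|^2 = \mu^2+\lambda^2$. The polynomial $cz^2+(d-a)z-b$ then has real part $c(\mu^2-\lambda^2)+(d-a)\mu - b$ and imaginary part $\lambda(2c\mu+d-a)$. Squaring and summing, I group the coefficients of the monomials $c^2$, $(d-a)^2$, $b^2$, $c(d-a)$, $bc$, $b(d-a)$, using $(\mu^2-\lambda^2)^2 + 4\mu^2\lambda^2 = |z|^4$ to simplify the pure-$c^2$ term. Expanding $(d-a)^2 = a^2+d^2 - 2ad$ then matches everything with the definition of $\Radius(a,b,c,d;z)$, except for a residual term of the form $-2\lambda^2(ad-bc)$.

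At this point the hypothesis $ad-bc=1$ enters: this residual term cancels precisely the $+2\lambda^2$ on the left-hand side, completing the identity. So the only place where the unit-determinant condition is used is in this final cancellation, and the whole argument is really just organised bookkeeping in the expansion. The main obstacle is avoiding algebraic slips; a useful sanity check is $\gamma=\mathrm{id}$, for which both sides reduce to $2\lambda^2$, and the case $c=0$, $a=d=1$ (translations by $b$), for which the formula reduces to the well-known $\cosh\rho(z,z+b) = 1 + b^2/(2\lambda^2)$.
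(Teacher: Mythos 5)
Your proposal is correct and follows essentially the same route as the paper: both reduce, via \eqref{2012:eq001} and $\Im(\gamma z)=\lambda|cz+d|^{-2}$, to the polynomial identity $|cz^2+(d-a)z-b|^2=\Radius(a,b,c,d;z)-2\lambda^2(ad-bc)$ and then expand directly. The residual term $-2\lambda^2(ad-bc)$ you identify is exactly where the paper also (implicitly) invokes the determinant condition, and your sanity checks are consistent with the formula.
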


\begin{proof}
First we have, by directly expanding and multiplying out all the terms,
\[
\begin{split}
|\gamma z-z|^2
&=
\Bigl|\frac{az+b-cz^2-dz}{cz+d}\Bigr|^2
\\
&=
\frac{((a-d)z+b-cz^2)((a-d)\overline{z}+b-c\overline{z}^2)}{|cz+d|^2}
=
\frac{\Radius(a,b,c,d;z)-2\lambda^2}{|cz+d|^2}.
\end{split}
\]
Since $\Im(z)=\lambda$ and the determinant condition implies $\Im(\gamma z)=\lambda|cz+d|^{-2}$,
the result follows from \eqref{2012:eq001}.
\end{proof}

\emph{Notation.} From now on, we will simply write
$\Radius(\gamma;z)$ instead of $\Radius(a,b,c,d;z)$.
In the rest of this section, $a,b,c,d$ will always denote
the entries of a matrix $\gamma\in\PSL(2,\RR)$.

Next, we look at angles of lattice points. As mentioned in the introduction,
they are defined as angles between geodesics.
However, in order to visualize and study them in a more comfortable way,
it is convenient to map the hyperbolic plane to the unit disc model
(see Figure \ref{S2:fig}).
Thus, for any given $z\in\HH$, we define the map
\begin{equation}\label{def:f}
f(w) = \frac{i(w-z)\,}{w-\overline{z}}.
\end{equation}
Clearly, $f$ maps $z$ to the origin. Also,
$f$ maps the real line to the unit circle
and the hyperbolic plane to its interior.
Regarding the action of $\PSL(2,\RR)$,
let us write again $z=\mu+i\lambda$
and let $\gamma\in\PSL(2,\RR)$ with $\Radius(\gamma;z)=n$.
In Lemma \ref{lemma:S2.2} we will show that, up to a constant,
$f$~maps the point $\gamma z$ to the point $x_\gamma+iy_\gamma$, where
\begin{equation}\label{def:xy}
\begin{cases}
x_\gamma = 2\lambda((\mu a+b)(\mu c+d)+\lambda^2ac-\mu((\mu c+d)^2+\lambda^2c^2)),\\
y_\gamma = n-2\lambda^2((\mu c+d)^2+\lambda^2c^2).
\end{cases}
\end{equation}
Since $f$ is a holomorphic diffeomorphism, hence conformal, angles
of lattice points can be studied by simply looking at 
angles of $x_\gamma+iy_\gamma$ as $\gamma$ varies in $\PSL(2,\ZZ)$.
%
%
%
%%%%%%%%%%%%%%%%%%%%%%%%%%%%%%%%%%%%%%%%%%%%%%%%%%%%%%%%%%%%%%%%%%%%%%%%%%
%%  FIGURE: LATTICE POINTS IN HYPERBOLIC PLANE AND IN UNIT DISC.        %%
%%%%%%%%%%%%%%%%%%%%%%%%%%%%%%%%%%%%%%%%%%%%%%%%%%%%%%%%%%%%%%%%%%%%%%%%%%
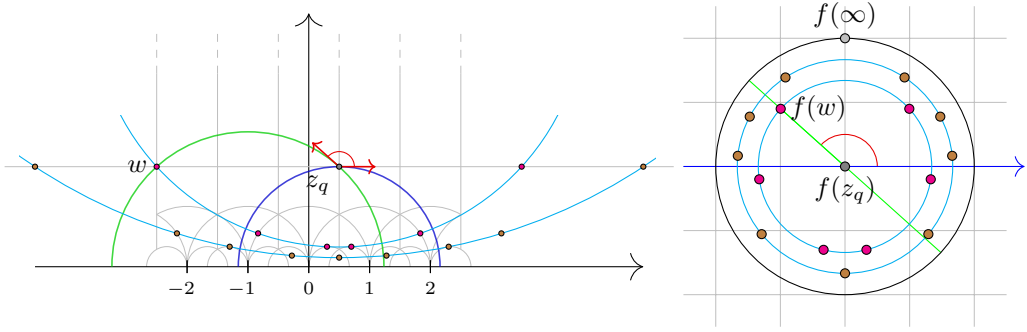
\begin{figure}[!h]
   %%%%%%%%%%%%%%%%%%%%%%%%%%%%%%%%%%%%%%%%%%%%%%%%%%%%%%%%%%
   %%  SUBFIGURE LEFT: LATTICE POINTS IN HYPERBOLIC PLANE  %%
   \subfloat{%
      \begin{tikzpicture}[scale=0.8]
   
       % TRANSLATES OF FUNDAMENTAL DOMAIN %
       \def\TopEdge{3.2}
       \def\TopTopEdge{3.9}
   	\begin{scope}[gray!50, very thin]
      	    \foreach \x in {-2.5,-1.5,-0.5,0.5,1.5,2.5}
      	        {
                 \draw[dashed] (\x,\TopEdge) -- (\x, \TopTopEdge);
      	        \draw (\x,0.288675) -- (\x,\TopEdge);
      	        }
      	    \foreach \x in {-1.5,-0.5,0.5,1.5,2.5}
      	        \draw (\x,{sin(60)}) arc [start angle = 60, end angle = 120, radius = 1];
      	    \foreach \x in {-2,-1,0,1,2}
      	        {
      	        \draw (\x,0) arc [start angle = 0, end angle = 60, radius = 1];
      	        \draw (\x,0) arc [start angle = 180, end angle = 120, radius = 1];
   	           }
   	       \foreach \x in {-2.333,-1.666,-1.333,-0.666,-0.333,0.333,0.666,1.333,1.666,2.333}
   	           {
   	           \clip (-3,0) rectangle (3,0.7);
   	           \draw (\x,0) circle [radius = 0.3333];
   	           }
       \end{scope}
   
       % AXES %
      \begin{scope}[arrows={->[scale=2]}]
          \draw (-4.5,0) -- (5.5,0);
   	    \draw (0,0) -- (0,4.2);
   	\end{scope}
   
   	% TICKS AND LABELS AT INTEGERS ON REAL AXIS %
   	\foreach \x in {-2,-1,0,1,2}
   	    \draw (\x,3pt) -- (\x,-3pt);
   	\foreach \x/\xtext in {-2/-2\;\;,-1/-1\;\;,0,1,2}% WITHOUT -1/2 and 1/2
   	    \draw (\x cm,3pt)--(\x cm,-3pt) node[below] {$\scriptstyle\xtext$};

      \def\myLambda{1.65831}
      \draw[very thin,gray!50] (-5,\myLambda)--(6,\myLambda);
   
      \draw[semithick,blue!80!black,opacity=0.75] ({\myLambda+0.5},0) arc [start angle = 0, end angle = 180, radius = \myLambda];
   
      \def\myRadius{2.23606}%sqrt(5)
      \draw[semithick,green!80!black,opacity=0.75] (-1+\myRadius,0) arc [start angle = 0, end angle = 180, radius = \myRadius];

      \coordinate (zq) at (0.5,\myLambda);
      \draw[arrows={->[scale=0.95]},red!90!black,semithick] (zq)--(1.1,\myLambda);
      \draw[arrows={->[scale=0.95]},red!90!black,semithick] (zq)--({0.5-0.6*cos(42.13)},{\myLambda+sin(42.13)*0.6});
      \draw[fill=gray] (zq) circle [radius=1.2pt] node[below left] {$z_q$};
   
      \draw[red!90!black] (0.75,\myLambda) arc [start angle = 0, end angle = 138, radius = 0.25];

      \clip (-4.75,-1) rectangle (5.7,2.5);
      \def\myRho{1.6244665}%acosh(29/11)
      \draw[cyan] (0.5,{0.5*\myLambda*(exp(\myRho)+exp(-\myRho))}) circle [radius = 0.5*\myLambda*(exp(\myRho)-exp(-\myRho))];
   
      \coordinate (zq1) at (0.3,0.331662479);
      \coordinate (zq2) at (-2.5,1.65831239);
      \coordinate (zq3) at (-0.8333,0.55277);
      \coordinate (zq4) at (0.7,0.331662479);
      \coordinate (zq5) at (3.5,1.65831239);
      \coordinate (zq6) at (1.83333,0.55277);
      \draw[fill=magenta] (zq1) circle [radius=1.2pt];
      \draw[fill=magenta] (zq2) circle [radius=1.2pt] node[left] {$w$};
      \draw[fill=magenta] (zq3) circle [radius=1.2pt];
      \draw[fill=magenta] (zq4) circle [radius=1.2pt];
      \draw[fill=magenta] (zq5) circle [radius=1.2pt];
      \draw[fill=magenta] (zq6) circle [radius=1.2pt];
   
      \def\myRhoTwo{2.39789527}%acosh(61/11)
      \draw[cyan] (0.5,{0.5*\myLambda*(exp(\myRhoTwo)+exp(-\myRhoTwo))}) circle [radius = 0.5*\myLambda*(exp(\myRhoTwo)-exp(-\myRhoTwo))];
   
      \coordinate (zq10) at (-0.2777,0.184256);
      \coordinate (zq11) at (-4.5,\myLambda);
      \coordinate (zq12) at (1.2777,0.184256);
      \coordinate (zq13) at (0.5,0.150755);
      \coordinate (zq14) at (-1.3,0.331662);
      \coordinate (zq15) at (5.5,\myLambda);
      \coordinate (zq16) at (2.3,0.331662);
      \coordinate (zq17) at (-2.1666,0.55277);
      \coordinate (zq18) at (3.1666,0.55277);
      \draw[fill=brown] (zq10) circle [radius=1.2pt];
      \draw[fill=brown] (zq11) circle [radius=1.2pt];
      \draw[fill=brown] (zq12) circle [radius=1.2pt];
      \draw[fill=brown] (zq13) circle [radius=1.2pt];
      \draw[fill=brown] (zq14) circle [radius=1.2pt];
      \draw[fill=brown] (zq15) circle [radius=1.2pt];
      \draw[fill=brown] (zq16) circle [radius=1.2pt];
      \draw[fill=brown] (zq17) circle [radius=1.2pt];
      \draw[fill=brown] (zq18) circle [radius=1.2pt];
   
   \end{tikzpicture}
   }
   %%%%%%%%%%%%%%%%%%%%%%%%%%%%%%%%%%%%%%%%%%%%%%%%%%%%%
   %%  SUBFIGURE RIGHT: LATTICE POINTS IN UNIT DISC   %%
   \subfloat{%
   \begin{tikzpicture}[scale=1.7]
   
      \draw[very thin,gray!50,step=0.5] (-1.25,-1.25) grid (1.25,1.25);
   
      \draw[red!90!black] (0.25,0) arc [start angle = 0,end angle = 138, radius=0.25];
      
      \draw[blue,arrows={->[scale=1.5]}] (-1.25,0)--(1.39,0);

      \draw[green] ({-cos(42.13)},{sin(42.13)})--({cos(42.13)},{-sin(42.13)});
      \draw[green] (0,0)--(-0.4974937,0.45);
   
      \draw (0,0) circle [radius=1];
      \draw[fill=gray] (0,0) circle [radius=1pt] node[below] {$f(z_q)$};
      
      \draw[cyan] (0,0) circle [radius=0.67082039];%sqrt(9/20)
      \draw[cyan] (0,0) circle [radius=0.8333];%sqrt(25/36)
   
      \draw[fill=gray!50] (0,1) circle [radius=1pt] node[above] {$f(\infty)$};
   
      \draw[fill=magenta] (-0.165831,-0.65) circle [radius=1pt];
      \draw[fill=magenta] (-0.4974937,0.45) circle [radius=1pt] node[right] {$f(w)$};
      \draw[fill=magenta] (-0.663324958,-0.1) circle [radius=1pt];
      \draw[fill=magenta] (0.165831,-0.65) circle [radius=1pt];
      \draw[fill=magenta] (0.4974937,0.45) circle [radius=1pt];
      \draw[fill=magenta] (0.663324958,-0.1) circle [radius=1pt];

      \draw[fill=brown] (-0.644899,-0.52777) circle [radius=1pt];
      \draw[fill=brown] (-0.46064233,0.69444) circle [radius=1pt];
      \draw[fill=brown] (0.644899,-0.52777) circle [radius=1pt];
      \draw[fill=brown] (0,-0.8333) circle [radius=1pt];
      \draw[fill=brown] (-0.829156,0.08333) circle [radius=1pt];
      \draw[fill=brown] (0.46064233,0.69444) circle [radius=1pt];
      \draw[fill=brown] (0.829156,0.08333) circle [radius=1pt];
      \draw[fill=brown] (-0.7370277,0.3888) circle [radius=1pt];
      \draw[fill=brown] (0.7370277,0.3888) circle [radius=1pt];
   
   \end{tikzpicture}
   }
\captionsetup{size=footnotesize}
\caption{Lattice points for the Heegner point $z_{q}$ associated to $q=11$ and $\Radius(\gamma;z_{q})$ equal to $\tfrac{29}{2}$ (pink) and $\tfrac{61}{2}$ (brown).}\label{S2:fig}
\end{figure}
%%%%%%%%%%%%%%%%%%%%%%%%%%%%%%%%%%
%%%%      END FIGURE         %%%%%
%%%%%%%%%%%%%%%%%%%%%%%%%%%%%%%%%%

\begin{lemma}\label{lemma:S2.2}
Let $z=\mu+i\lambda\in\HH$ and $\gamma\in\PSL(2,\RR)$.
Let $f$ be the map defined in \eqref{def:f}.
Set $n=\Radius(\gamma;z)=2\lambda^2\cosh\rho(z,\gamma z)$.
Then
\[
f(\gamma z) = \frac{1}{n+2\lambda^2}(x_\gamma + iy_\gamma),
\]
where $x_\gamma,y_\gamma$ are as in \eqref{def:xy}.
Furthermore, we have
$x_\gamma^2 + y_\gamma^2 = n^2-4\lambda^4$.
\end{lemma}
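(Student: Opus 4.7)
The statement has two parts: the explicit formula (A) for $f(\gamma z)$ and the norm identity (B), $x_\gamma^2 + y_\gamma^2 = n^2 - 4\lambda^4$. Both are direct computations in the spirit of Lemma \ref{lemma:S2.1}, and I would dispatch them in the order (B) then (A) — though (B) is formally a consequence of (A), it motivates the denominator $n + 2\lambda^2$ appearing in the formula for $f(\gamma z)$ and so is worth recording first.

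For (B), since $f$ is a Cayley-type map, $|f(\gamma z)|^2 = |\gamma z - z|^2/|\gamma z - \bar z|^2$. The numerator has already been computed in the proof of Lemma \ref{lemma:S2.1}: $|\gamma z - z|^2 |cz + d|^2 = n - 2\lambda^2$. The same style of expansion applied to $\gamma z - \bar z = (az + b - c|z|^2 - d\bar z)/(cz + d)$ yields the sibling identity $|\gamma z - \bar z|^2 |cz + d|^2 = n + 2\lambda^2$, the only change being a sign flip on $2\lambda^2$ after comparison with the formula for $\Radius(\gamma;z)$. Consequently $|f(\gamma z)|^2 = (n - 2\lambda^2)/(n + 2\lambda^2)$, and (B) follows once (A) is established, because $x_\gamma^2 + y_\gamma^2 = (n + 2\lambda^2)^2 |f(\gamma z)|^2$.

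For (A), I would write $\gamma z = X + iY$ explicitly: rationalizing $\gamma z = (az + b)/(cz + d)$ and using $ad - bc = 1$ gives $Y = \lambda/L$ and $X - \mu = [(\mu a + b)(\mu c + d) + ac\lambda^2 - \mu L]/L$, where $L := |cz + d|^2 = (\mu c + d)^2 + c^2 \lambda^2$. Then
\[
f(\gamma z) = \frac{i[(X - \mu) + i(Y - \lambda)]}{(X - \mu) + i(Y + \lambda)},
\]
and rationalizing the denominator produces one equal to $|\gamma z - \bar z|^2 = (n + 2\lambda^2)/L$, together with a numerator that expands to $2\lambda(X - \mu) + i[(X - \mu)^2 + Y^2 - \lambda^2]$.

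Multiplying through by $L$ finishes the match. The real part becomes $2\lambda \cdot L(X - \mu)$, which reproduces $x_\gamma$ by the formula for $L(X - \mu)$. For the imaginary part, I would rewrite $(X - \mu)^2 + Y^2 - \lambda^2 = |\gamma z - \bar z|^2 - 2\lambda(Y + \lambda)$, then multiply by $L$ and substitute $LY = \lambda$ and $L |\gamma z - \bar z|^2 = n + 2\lambda^2$; the result collapses to $n - 2\lambda^2 L = y_\gamma$. The real obstacle is the auxiliary identity $|\gamma z - \bar z|^2 |cz + d|^2 = n + 2\lambda^2$, which is pure bookkeeping but cannot be skipped: one must carefully expand a product and match it against the (not self-evidently symmetric) definition of $\Radius$, with the determinant condition $ad - bc = 1$ supplying the final cancellation.
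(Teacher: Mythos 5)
Your proposal is correct and follows essentially the same route as the paper: both hinge on the auxiliary identity $|\gamma z-\overline{z}|^2\,|cz+d|^2=n+2\lambda^2$ (which the paper gets slightly more quickly from $|w-\overline{z}|^2=|w-z|^2+4\Im(w)\Im(z)$ rather than by re-expanding), then rationalize $f(\gamma z)$ and match real and imaginary parts, and finally read off the norm identity from $|f(\gamma z)|^2=(n-2\lambda^2)/(n+2\lambda^2)$. Your intermediate parametrization via $X$, $Y$ and $L=|cz+d|^2$ is only a cosmetic reorganization of the paper's direct expansion in $a,b,c,d$, and all your claimed identities check out.
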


\begin{proof}
By the identities $|w-\overline{z}|^2=|w-z|^2+4\Im(w)\Im(z)$
and $\Im(\gamma z)=\lambda|cz+d|^{-2}$ we deduce
\[
|\gamma z-\overline{z}|^2 = \frac{2\lambda^2(\cosh\rho(z,\gamma z)+1))}{|cz+d|^2} = \frac{(n+2\lambda^2)}{|cz+d|^2}.
\]
Therefore,
\[
f(\gamma z)
=
i\,\frac{|cz+d|^2}{n+2\lambda^2}(\gamma z-z)(\overline{\gamma z} - z).
\]
Expanding the product gives
\[
\begin{split}
(n+2\lambda^2)f(\gamma z)
={}&
2\lambda((\mu a+b)(\mu c+d)+\lambda^2ac-\mu((\mu c+d)^2+\lambda^2c^2))
\\
&\phantom{xxxxxxx}+
i\bigl( |z|^2a^2 + b^2 + \mu^2|z|^2c^2 + \mu^2d^2 -\lambda^2|cz+d|^2
\\
&\phantom{xxxxxxxxxxxxxxx}+
2\mu( a(b-|z|^2c) - d(b-\mu^2c) - \mu(ad+bc) )\bigr).
\end{split}
\]
We recognize the real part as $x_\gamma$. As for the imaginary part, we add and subtract $\lambda^2|cz+d|^2$
and obtain
\[
|z|^2a^2+b^2+|z|^4c^2+|z|^2d^2+2\mu((a-d)(b-|z|^2c)-\mu(ad+bc)) - 2\lambda|cz+d|^2.
\]
The sum of all the terms but the last one gives $\Radius(a,b,c,d;z)$ 
and so we obtain $y_\gamma$. Now let us show the last part of the lemma,
i.e., $x_\gamma^2+y_\gamma^2=n^2-4\lambda^4$.
Proceeding as in the beginning of the proof, we can write
\[
|f(w)|^2
=
\frac{|w-z|^2}{|w-\overline{z}|^2}
=
\frac{|w-z|^2}{2\Im(w)\Im(z)} \left(\frac{|w-z|^2}{2\Im(w)\Im(z)}+2\right)^{\!-1}
=
\frac{\cosh\rho(z,w)-1}{\cosh\rho(z,w)+1}.
\]
Setting $w=\gamma z$ and recalling that $n=2\lambda^2\cosh\rho(z,\gamma z)$ gives
\[
|f(\gamma z)|^2 = \frac{n-2\lambda^2}{n+2\lambda^2}.
\]
Since $x_\gamma^2+y_\gamma^2=(n+2\lambda^2)^2|f(\gamma z)|^2$, we obtain the claim.
\end{proof}

\subsection{Heegner points}

Let us specialise to the Heegner points $z_q$ defined in \eqref{def:zq}.
If $\gamma\in\Gamma$, then $\Radius(\gamma;z_q)$
is an integer (if~$q$ is even) or half an odd integer
(if $q$ is odd). Indeed, when $q$ is even we have $\mu=0$ and thus
\[
\Radius(\gamma;z_q)
=
a^2|z_q|^2 + b^2 + c^2|z_q|^4 + d^2|z_q|^2,
\]
which is an integer since $|z_q|^2\in\NN$.
When $q$ is odd, and so $\mu=1/2$,
recalling also the identity $ad-bc=1$,
we obtain
\[
\begin{split}
2\Radius(\gamma;z_q)
={}&
2a^2|z_q|^2 + 2b^2 + 2c^2|z_q|^4 + 2d^2|z_q|^2 
\\
&
+4\mu(a-d)(b+c|z_q|^2) - 4\mu^2(ad+bc)
\equiv ad+bc \equiv 1 \pmod{2},
\end{split}
\]
which shows that $\Radius(\gamma;z_q)$ is half an odd integer.
Combining the two cases, we can summarize by saying that
the set of arithmetic radii
$\Ncal_{z_q}$ is a subset of $(1-\mu)\NN$.

For $n\in\Ncal_{z_q}$, we wish to study angles attached to
lattice points $\gamma z_q$ with $\gamma\in\Gamma_{z_q,n}$.
The first step consists in using Lemma \ref{lemma:S2.2} to map
hyperbolic circles to Euclidean ones centred at the origin.
To do this, we use the map
\begin{equation}\label{2401:eq001}
\phi:\gamma\longmapsto x_\gamma + iy_\gamma = (n+2\lambda^2)f(\gamma z_q),
\end{equation}
where $f,x_\gamma$ and $y_\gamma$ are defined in \eqref{def:f} and \eqref{def:xy}.
Note that, as a map from matrices to points,
the function $\phi$ is not always injective.

\begin{lemma}\label{lemma:S2.3}
Let $z_q$ be a Heegner point of class number one, as defined in \eqref{def:zq},
and let $\phi$ be the map defined in \eqref{2401:eq001}.
Then every point in the image of $\phi$
appears a number of times equal to the cardinality of the stabilizer of $z_q$ in $\PSL(2,\ZZ)$.
Equivalently, it appears $\frac{1}{2}|\mathcal{O}_K^\times|$ times, where
$|\mathcal{O}_K^\times|$ is the number of units in $\mathcal{O}_K$.
\end{lemma}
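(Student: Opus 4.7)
My plan is to factor $\phi$ through the orbit map $\gamma\mapsto \gamma z_q$ and reduce the counting to identifying the stabilizer of $z_q$ in $\Gamma$. By \eqref{2401:eq001} we have $\phi(\gamma)=(n+2\lambda^2)f(\gamma z_q)$, and the map $f$ in \eqref{def:f} is a Möbius transformation, hence injective on $\HH$. Consequently $\phi(\gamma_1)=\phi(\gamma_2)$ is equivalent to $\gamma_1 z_q=\gamma_2 z_q$, i.e.\ $\gamma_2^{-1}\gamma_1\in\mathrm{Stab}_\Gamma(z_q)$. Every non-empty fibre of $\phi$ is therefore a coset of $\mathrm{Stab}_\Gamma(z_q)$ and has cardinality $|\mathrm{Stab}_\Gamma(z_q)|$, which settles the first part of the statement.

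To prove the equivalent formulation I would identify $\mathrm{Stab}_\Gamma(z_q)$ with $\mathcal{O}_K^\times/\{\pm 1\}$ by an explicit matrix-to-algebraic-integer correspondence. Given a lift $\gamma=\bigl(\begin{smallmatrix}a&b\\c&d\end{smallmatrix}\bigr)$ in $\mathrm{SL}(2,\ZZ)$ of an element that fixes $z_q$, the equation $\gamma z_q=z_q$ rearranges to $cz_q^2+(d-a)z_q-b=0$. Since $z_q$ has minimal polynomial $X^2-2\mu X+|z_q|^2$ over $\QQ$, comparison of coefficients forces, when $c\neq 0$, the identities $b=-c|z_q|^2$ and $d=a-2c\mu$; the case $c=0$ degenerates to $\gamma=\pm I$. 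Substituting into the determinant relation $ad-bc=1$ yields $(a-c\mu)^2+c^2\lambda^2=1$, which is exactly the assertion that $\alpha:=a-cz_q$ is a unit of $\mathcal{O}_K$. Conversely, every $\alpha\in\mathcal{O}_K^\times$ has a unique expansion $\alpha=a-cz_q$ with $a,c\in\ZZ$ (because $\{1,z_q\}$ is a $\ZZ$-basis of $\mathcal{O}_K$), and then $b:=-c|z_q|^2$ and $d:=a-2c\mu$ produce a stabilising matrix in $\mathrm{SL}(2,\ZZ)$.

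The correspondence just built is a bijection between $\mathcal{O}_K^\times$ and the stabilizer in $\mathrm{SL}(2,\ZZ)$, and is equivariant with respect to $\alpha\leftrightarrow -\alpha$ and $\gamma\leftrightarrow -\gamma$. Descending to $\Gamma=\PSL(2,\ZZ)$ therefore gives $|\mathrm{Stab}_\Gamma(z_q)|=\tfrac{1}{2}|\mathcal{O}_K^\times|$, as claimed. The only delicate point in this approach is the integrality check $b,d\in\ZZ$, which uses precisely that $|z_q|^2\in\ZZ$ and $2\mu\in\ZZ$ in each of the nine Heegner cases, equivalently that $\mathcal{O}_K=\ZZ[z_q]$; no further analytic input beyond the injectivity of $f$ is required.
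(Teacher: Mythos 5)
Your proposal is correct. The first half --- using the injectivity of the M\"obius map $f$ to reduce the multiplicity of $\phi$ to the order of the stabilizer of $z_q$ in $\Gamma$ --- is exactly the paper's argument. Where you diverge is in identifying that stabilizer with $\Ocal_K^\times/\{\pm1\}$: the paper simply exhibits the nontrivial stabilizing matrices for $z_4=i$ and $z_3=\frac{1+i\sqrt{3}}{2}$, asserts triviality of the stabilizer in the remaining seven cases, and compares with the known unit counts $4$, $6$, $2$; you instead construct a uniform bijection $\gamma\mapsto \alpha=a-cz_q$ between $\mathrm{Stab}_{\mathrm{SL}(2,\ZZ)}(z_q)$ and $\Ocal_K^\times$, using the minimal polynomial $X^2-2\mu X+|z_q|^2$ to pin down $b=-c|z_q|^2$, $d=a-2\mu c$, and the determinant condition to show $N(\alpha)=(a-c\mu)^2+c^2\lambda^2=1$. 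Your route is slightly longer but more robust: it proves $|\mathrm{Stab}_\Gamma(z_q)|=\tfrac12|\Ocal_K^\times|$ for any $z_q$ with $\Ocal_K=\ZZ[z_q]$ rather than verifying nine cases, and it explains structurally why the two counts agree. The details you flag as delicate are indeed the ones to write out: the integrality of $b$ and $d$ (which holds since $|z_q|^2\in\ZZ$ and $2\mu\in\{0,1\}$ for all nine discriminants) and the degenerate case $c=0$, which forces $\gamma=\pm I$ and matches $\alpha=\pm1$ under your correspondence.
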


\begin{proof}
Since $f$ is bijective, we have $f(\gamma_1z_q)=f(\gamma_2z_q)$
if and only if $\gamma_1z_q=\gamma_2z_q$, that is, $\gamma_2^{-1}\gamma_1$
stabilizes $z_q$. Apart from the identity matrix,
the imaginary unit $i$ is fixed by the matrix
$(\begin{smallmatrix}0&-1\\1&0\end{smallmatrix})$,
while
the point $z_q=\frac{1+i\sqrt{3}}{2}$ is fixed by
$(\begin{smallmatrix}1&-1\\1&0\end{smallmatrix})$
and
$(\begin{smallmatrix}0&1\\-1&1\end{smallmatrix})$.
All other points have trivial stabilizer.
The last sentence follows from the fact that $|\Ocal_K^\times|=4,6$
for $K=\QQ(i)$ and $\QQ(\frac{1+i\sqrt{3}}{2})$ respectively, whereas $|\Ocal_K^\times|=2$
in all other cases.
\end{proof}

We now describe a connection between $x_\gamma+iy_\gamma$, which has norm $n^2-4\lambda^4$
by Lemma \ref{lemma:S2.2}, and two algebraic integers in $\Ocal_K$ of norm $n\pm 2\lambda^2$,
from which we can recover $\gamma$.
Such a connection will be crucial for the proof of Proposition \ref{intro:prop1}
and Proposition \ref{intro:prop2}.

Starting from $\gamma=(\begin{smallmatrix}a&b\\c&d\end{smallmatrix})\in\PSL(2,\ZZ)$
and writing $z_q=\mu+i\lambda$ as in \eqref{def:zq}, we define four integers $r,u,s,t$ by:
\begin{equation}\label{def:rust}
\left\{\begin{array}{l}
r = a + d,\\
u = b - |z_q|^2c - 2\mu d
\end{array}\right.
\qquad
\left\{\begin{array}{l}
s = a-d-2\mu c,\\
t = b + |z_q|^2c.
\end{array}\right.
\end{equation}
A direct computation shows that
\[
\begin{gathered}
u^2+2\mu ur+|z_q|^2r^2 = \Radius(\gamma;z_q) + 2\lambda^2(ad-bc),\\
t^2+2\mu st+|z_q|^2s^2 = \Radius(\gamma;z_q)-2\lambda^2(ad-bc).
\end{gathered}
\]
The quantities on the left are norms
of algebraic integers in $\Ocal_K$, respectively of $u+rz_q$ and $t+sz_q$.
Upon setting $n=\Radius(\gamma;z_q)$ and recalling that $ad-bc=1$, we have
\[
N(u+rz_q) = n+2\lambda^2,
\quad
N(t+sz_q)=n-2\lambda^2.
\]
The definition of $r,u,s,t$ can be written in matrix form as
\[
\begin{pmatrix}r\\u\\s\\t\end{pmatrix}
=
\begin{pmatrix}
1 & 0 & 0 & 1\\
0 & 1 & -|z_q|^2 & -2\mu\\
1 & 0 & -2\mu & -1\\
0 & 1 & |z_q|^2 & 0
\end{pmatrix}
\begin{pmatrix}a\\b\\c\\d\end{pmatrix}.
\]
The determinant of the above matrix is $4(|z_q|^2-\mu^2)=4\lambda^2=q$,
so the transformations can be inverted and give
\begin{equation}\label{2501:eq002}
\begin{pmatrix}a\\b\\c\\d\end{pmatrix}
=
\frac{1}{q}
\begin{pmatrix}
(2|z_q|^2-4\mu^2) & -2\mu & 2|z_q|^2 & 2\mu\\
2\mu|z_q|^2 & 2|z_q|^2 & -2\mu|z_q|^2 & (2|z_q|^2-4\mu^2)\\
-2\mu & -2 & 2\mu & 2\\
2|z_q|^2 & 2\mu & -2|z_q|^2 & -2\mu
\end{pmatrix}
\begin{pmatrix}r\\u\\s\\t\end{pmatrix}.
\end{equation}
Denote by $M$ the above $4\times 4$ matrix
and use the shorthand $\mathbf{v}=(r,u,s,t)^t$.
Clearly, the numbers $a,b,c,d$ obtained from $\mathbf{v}$
in this way are integers if and only if
\begin{equation}\label{2501:eq001}
M \mathbf{v}\equiv 0\pmod{q}.
\end{equation}
Therefore, the above construction shows that
$\Gamma_{z_q,n}$ is in bijection with the set
\begin{equation}\label{def:Czqn}
\Ccal_{z_q,n}=
\left\{
u+rz_q,\,t+sz_q\in\Ocal_K \;\Bigg|\;
\begin{array}{c}
N(u+rz_q)=n+2\lambda^2,\\
N(t+sz_q)=n-2\lambda^2\rule{0pt}{10pt}\\
\text{ and }\eqref{2501:eq001} \text{ holds}\rule{0pt}{10pt}
\end{array}
\right\}/\{\pm 1\}.
\end{equation}
The quotient accounts for the fact that $(r,u,s,t)$ must be identified with $-(r,u,s,t)$
since $\gamma$ is identified with $-\gamma$ in $\PSL(2,\ZZ)$.

As an example, when $q=4$ then $\mu=0$, $\lambda=|z_q|=1$ and \eqref{2501:eq001} reduces to
\[
2r\pm 2s\equiv 2t\pm 2u \equiv 0 \pmod{4},
\]
which amounts to saying that $r$ and $s$ have the same parity, as do $u$ and $t$
(cf.~\cite[\S2.2]{chatzakos_distribution_2020}).
Similarly, when $q=8$ the system simplifies to
\begin{equation}\label{3103:eq001}
r\equiv s\!\!\!\pmod{2} \qquad u\equiv t\!\!\!\pmod{4}.
\end{equation}
When $q$ is odd, the system \eqref{2501:eq001}
gives four congruences involving all of $r,u,s$ and~$t$. However, they are in fact all equivalent.
To see this, note that $4|z_q|^2\equiv 1\!\!\pmod{q}$ and that $2$ is invertibile modulo $q$.
Multiplying the first row of $M$ by $2$ and reducing modulo $q$ gives the third row.
The same happens if we multiply the second and fourth rows
respectively by $-4$ and by $-2$. Therefore, when $q$ is odd,
the system \eqref{2501:eq001} is equivalent to the single condition
\begin{equation}\label{3103:eq004}
r+2u \equiv s+2t \pmod{q}.
\end{equation}

Furthermore, \eqref{def:Czqn} suggests a direct relation between the cardinality
$|\Gamma_{z_q,n}|$ and the product $r_K(n+2\lambda^2)r_K(n-2\lambda^2)$,
the total number of representations of $n\pm 2\lambda^2$ as norms in $\Ocal_K$. 
Making this precise yields Proposition \ref{intro:prop2}.

\begin{proof}[Proof of Proposition \ref{intro:prop2}]
The case $q=4$ is discussed e.g.~in \cite[\S2A]{chatzakos_distribution_2020}
or \cite[(1.12)--(1.14)]{friedlander_hyperbolic_2009},
so we only treat the cases $q=8$ and $q$ odd.
Our goal is to compare $\Ccal_{z_q,n}$ with the set
\begin{equation}\label{2903:eq003}
\{(\alpha,\beta)\in\Ocal_K^2:\;N(\alpha)=n+2\lambda^2,\;N(\beta)=n-2\lambda^2\}/\{\pm 1\}.
\end{equation}
\emph{Case $q=8$.} In this case, \eqref{2501:eq001} reduces to the two congruences in \eqref{3103:eq001}.
At the same time, the condition on the norms gives
\begin{equation}\label{2903:eq001}
u^2+2r^2 = n+4,\qquad t^2+2s^2=n-4.
\end{equation}
We distinguish according to the value of $n$ modulo $8$.
Assume $n$ is odd, so $n\equiv 5,7\!\!\pmod{8}$,
for otherwise \eqref{2903:eq001} has no solutions.
If $n\equiv 5\!\!\pmod{8}$, then
reducing \eqref{2903:eq001}$\pmod{8}$ we see
that $r$ and $s$ are even while $t$ and $u$ are odd.
In particular, the first condition in \eqref{3103:eq001} is always satisfied.
On the other hand, since the pair
$(\eps_1 u+rz_q,\eps_2 t+sz_q)$ is in \eqref{2903:eq003}
for any choice of signs $\eps_1,\eps_2\in\{\pm1\}$,
we deduce that only half the elements in \eqref{2903:eq003}
satisfy the second condition in \eqref{3103:eq001}.
In other words, in this case we have
\begin{equation}\label{3103:eq002}
|\Gamma_{z_q,n}| = |\Ccal_{z_q,n}| = c_n r_K(n+2\lambda^2)r_K(n-2\lambda^2)\quad\text{with}\quad c_n=\frac{1}{4}.
\end{equation}
When $n\equiv 7\!\!\pmod{8}$, again by looking at \eqref{2903:eq001}
modulo $8$, we deduce that $r,u,s,t$ are all odd.
Therefore, like before, the first condition in \eqref{3103:eq001}
is always satisfied while the second one
is satisfied by only half the elements in \eqref{2903:eq003},
which gives \eqref{3103:eq002} with $c_n=1/4$.

When $n$ is even, we claim that $c_n=1/2$ always. Indeed, if $n\equiv 2\!\!\pmod{8}$,
then reducing \eqref{2903:eq001} modulo $8$ gives $r,s$ odd
and $u\equiv t\equiv 2\!\!\pmod{4}$, so \eqref{3103:eq001} is automatically
satisfied. Similarly, when $n\equiv 6\!\!\pmod{8}$ we get $r,s$ odd and $u\equiv t\equiv 0\!\!\pmod{4}$.
If $n\equiv 4\!\!\pmod{8}$, then $r$ and $s$ are even and
$u\equiv t\equiv 0\!\!\pmod{4}$. Finally, $n\equiv 0\!\!\pmod{8}$ leads to $r,s$ even
and $u\equiv t\equiv 2\!\!\pmod{4}$.

\emph{Case $q$ odd.}
In this case the system \eqref{2501:eq001}
reduces to the single condition \eqref{3103:eq004}.
Let $u+rz_q\in\Ocal_K$ have norm $n+2\lambda^2$ and $t+sz_q\in\Ocal_K$
have norm $n-2\lambda^2$. In particular,
\[
(2u+r)^2 \equiv 4n \equiv (2t+s)^2 \pmod{q}.
\]
Therefore, $r+2u\equiv \pm (s+2t)$ modulo $q$.
If $q|2n$, then \eqref{3103:eq004} is automatically satisfied and $c_n=1/2$.
When $q\nmid 2n$, only half the points in \eqref{2903:eq003}
will satisfy \eqref{3103:eq004}, hence $c_n=1/4$ in this case.
\end{proof}

We return now to the points $x_\gamma+iy_\gamma$.
Lemma \ref{lemma:S2.3} gives a $\frac{1}{2}|\Ocal_K^\times|$-to-one map
$\phi:\Gamma_{z_q,n}\longmapsto L_n$, where
\begin{equation}\label{def:Ln}
L_n = \{(x_\gamma,y_\gamma) \text{ as in \eqref{def:xy}}, \;
\gamma=(\begin{smallmatrix}a&b\\c&d\end{smallmatrix})\in\PSL(2,\ZZ)\}.
\end{equation}
We make one step further in order to forget about matrices:
we will show that $L_n$ is equal to the set
\begin{equation}\label{def:Lncurly}
\Lcal_n = \{(x,y)\in\RR^2:\;x^2+y^2=n^2-4\lambda^4;\; y\equiv n\!\!\pmod{2\lambda^2},\; x\equiv 0\!\!\pmod{\lambda}\}.
\end{equation}
To show the equality, we will use the algebraic integers $u+rz_q$ and $t+sz_q$
as an intermediate step to pass from $L_n$ to $\Lcal_n$.
In particular, we will obtain that elements in either set
correspond to algebraic integers. For $L_n$, this
is straightforward from the following lemma.

\begin{lemma}\label{lemma:S2.4}
Let $\gamma\in\Gamma$,
let $x_\gamma+i\gamma$ be as in \eqref{def:xy}
and $r,u,s,t$ be as in \eqref{def:rust}. Then
\[
y_\gamma+ix_\gamma = (u+rz_q)(t+s\zqbar).
\]
In particular,
\[
|L_n| =\frac{2|\Ccal_{z_q,n}|}{|\Ocal_K^\times|} = \frac{2c_n}{|\Ocal_K^\times|} r_K(n+2\lambda^2)r_K(n-2\lambda^2),
\]
with $c_n$ as in Proposition \ref{intro:prop2}.
\end{lemma}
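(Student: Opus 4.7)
The proof splits naturally into two steps: verifying the algebraic identity $y_\gamma + ix_\gamma = (u+rz_q)(t+s\zqbar)$, and then deriving the counting formula from it together with earlier results.

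For the identity, my plan is a direct expansion. Writing $z_q = \mu+i\lambda$ and $\zqbar = \mu-i\lambda$, a short computation gives
\[
(u+rz_q)(t+s\zqbar) = \bigl[(u+r\mu)(t+s\mu) + rs\lambda^2\bigr] + i\lambda(rt-su).
\]
So it suffices to check separately that the imaginary part equals $x_\gamma$ and the real part equals $y_\gamma$. For the imaginary part, I would substitute the definitions of $r,s,t,u$ from \eqref{def:rust} into $\lambda(rt-su)$, expand, and use $|z_q|^2 = \mu^2+\lambda^2$; both this and the formula \eqref{def:xy} for $x_\gamma$ simplify to $2\lambda\bigl[|z_q|^2 ac + \mu(ad+bc) + bd - \mu|z_q|^2 c^2 - 2\mu^2 cd - \mu d^2\bigr]$, matching term by term. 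For the real part, I would expand $(u+r\mu)(t+s\mu) + rs\lambda^2$ using the same substitutions and compare with $y_\gamma = n - 2\lambda^2((\mu c+d)^2 + \lambda^2 c^2)$. This is where the determinant condition $ad-bc=1$ enters: recalling from the preamble that $N(u+rz_q) = (u+r\mu)^2 + r^2\lambda^2 = n+2\lambda^2$ and $N(t+sz_q) = n-2\lambda^2$, one can organize the expansion so that the coefficient of $ad-bc$ produces exactly the additive constant that turns the purely quadratic expression in $a,b,c,d$ into $n - 2\lambda^2((\mu c+d)^2 + \lambda^2 c^2)$.

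For the counting formula, I would combine three facts already at hand. First, the explicit reconstruction \eqref{2501:eq002} together with the congruence \eqref{2501:eq001} shows that $\gamma \mapsto (u+rz_q,\, t+sz_q)$ is a bijection between $\Gamma_{z_q,n}$ and $\Ccal_{z_q,n}$, so $|\Gamma_{z_q,n}| = |\Ccal_{z_q,n}|$. Second, Lemma~\ref{lemma:S2.3} asserts that the map $\phi\colon \Gamma_{z_q,n} \to L_n$ defined in \eqref{2401:eq001} is $\tfrac{1}{2}|\Ocal_K^\times|$-to-one onto its image, which is $L_n$ by definition \eqref{def:Ln}; hence
\[
|L_n| = \frac{2|\Gamma_{z_q,n}|}{|\Ocal_K^\times|} = \frac{2|\Ccal_{z_q,n}|}{|\Ocal_K^\times|}.
\]
Third, Proposition~\ref{intro:prop2} evaluates $|\Gamma_{z_q,n}| = c_n\, r_K(n-2\lambda^2)\, r_K(n+2\lambda^2)$, and substituting this gives the claimed closed form for $|L_n|$.

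The only substantive obstacle is the bookkeeping in the real-part identity, since $y_\gamma$ has the inhomogeneous shape $n - 2\lambda^2(\cdots)$ while $(u+r\mu)(t+s\mu) + rs\lambda^2$ is homogeneous in $a,b,c,d$. The cleanest way to handle it, which I would adopt, is to rewrite $n$ using $n = \tfrac{1}{2}(N(u+rz_q) + N(t+sz_q))$ and verify the resulting pure polynomial identity in $a,b,c,d,\mu,\lambda$ modulo the relation $|z_q|^2 = \mu^2+\lambda^2$; nothing beyond elementary algebra is required, but the number of monomials makes care essential.
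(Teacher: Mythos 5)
Your proposal is correct and takes essentially the same route as the paper: both verify the identity $y_\gamma+ix_\gamma=(u+rz_q)(t+s\zqbar)$ by direct expansion (the paper rewrites $x_\gamma,y_\gamma$ in terms of $r,u,s,t$ via \eqref{2501:eq002}, you substitute $r,u,s,t$ in terms of $a,b,c,d$, but it is the same brute-force polynomial check), and both obtain the cardinality formula by chaining the bijection $\Gamma_{z_q,n}\leftrightarrow\Ccal_{z_q,n}$ with Lemma \ref{lemma:S2.3} and Proposition \ref{intro:prop2}. One minor remark: the determinant condition is not actually needed for the real-part identity, since $\tfrac{1}{2}\bigl(N(u+rz_q)+N(t+sz_q)\bigr)=\Radius(\gamma;z_q)=n$ holds identically in $a,b,c,d$, so both sides are homogeneous quadratic forms and the match is a pure polynomial identity.
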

\begin{proof}
First expand the product on the right, which gives
\begin{equation}\label{2701:eq010}
y_\gamma+ix_\gamma = rs|z_q|^2 + ut + \mu\left(rt+us\right) - \lambda i(us - rt).
\end{equation}
Now \eqref{def:xy} and \eqref{2501:eq002} give
\[
y_\gamma = n - \frac{8\lambda^4}{q^2}\Bigl(N(u+rz_q)+N(t+sz_q) - 2\left(rs|z_q|^2+ut + \mu rt +\mu us \right)\Bigr).
\]
Since $4\lambda^2=q$, $N(u+rz_q)=n+2\lambda^2$ and $N(t+sz_q)=n-2\lambda^2$,
the right-hand side simplifies, giving the real part in \eqref{2701:eq010}.
As for $x_\gamma$, we have
\[
\begin{split}
\frac{q^2x_\gamma}{2\lambda}
={}&
4\lambda^2\left(
\mu N(u+rz_q)+\mu N(t+sz_q)
-2\mu rs|z_q|^2 - 2\mu ut + 2rt(\lambda^2-\mu^2) - 2us|z_q|^2
\right)
\\
&-4\lambda^2\mu
\left(N(u+rz_q)+N(t+sz_q) - 2\left(rs|z_q|^2+ut + \mu rt + \mu us \right)\right).
\end{split}
\]
Simplifying and using $|z_q|^2=\mu^2+\lambda^2$ and again $4\lambda^2=q$, we obtain
the imaginary part in \eqref{2701:eq010}.
\end{proof}

Elements in $\Lcal_n$ correspond to algebraic integers too.

\begin{lemma}\label{lemma:S2.5}
Let $(x,y)$ be a point in $\Lcal_n$. Then $y+ix$ is an element of $\Ocal_K$.
\end{lemma}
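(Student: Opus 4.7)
The plan is to read off integers $a, b \in \ZZ$ such that $y + ix = a + b z_q$, using the basis $\{1, z_q\}$ of $\Ocal_K$. Recall $z_q = \mu + i\lambda$, so matching real and imaginary parts forces $b = x/\lambda$ and $a = y - b\mu$. The condition $x \equiv 0 \pmod{\lambda}$ in the definition of $\Lcal_n$ immediately gives $b \in \ZZ$. What remains is to show $a \in \ZZ$, i.e., $y - b\mu \in \ZZ$.

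I would split into the two cases for $\mu$. If $q$ is even, then $\mu = 0$ and we only need $y \in \ZZ$. This is automatic: $2\lambda^2 = q/2 \in \ZZ$ and $n \in \ZZ$, so the congruence $y \equiv n \pmod{2\lambda^2}$ places $y$ in $\ZZ$. Hence $y + ix = y + b z_q \in \Ocal_K$.

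The interesting case is $q$ odd, where $\mu = 1/2$ and we must prove $2y \equiv b \pmod{2}$. Introduce the auxiliary integer $k = (y - n)/(2\lambda^2) = (2y - 2n)/q \in \ZZ$; since $n$ is half an odd integer and $q$ is odd, this yields $2y = 2n + qk \equiv 1 + k \pmod{2}$. It therefore suffices to establish $b \equiv 1 + k \pmod{2}$. This is where the congruence conditions alone are not sufficient and one must exploit the norm equation $x^2 + y^2 = n^2 - 4\lambda^4$. The key computation is to substitute $x = b\lambda$ and $y = n + (q/2)k$, clear denominators using $4\lambda^2 = q$, and cancel a factor of $q$ to obtain a diophantine relation of the shape
\[
b^2 + qk^2 + 4nk + q = 0.
\]
Reducing modulo $2$ (using that $q$ is odd and $4nk$ is even) gives $b^2 + k^2 + 1 \equiv 0 \pmod{2}$, equivalently $b \equiv k + 1 \pmod{2}$, which is precisely what we need.

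The argument is essentially algebraic bookkeeping; I do not expect any serious obstacle. The only point that requires a little care is that for $q$ odd one cannot conclude from the congruence conditions of $\Lcal_n$ alone, and must genuinely use the quadratic relation $x^2 + y^2 = n^2 - q^2/4$ to force the parity compatibility between $b$ and $k$. Once that parity is settled, one immediately writes down $a = y - b/2 \in \ZZ$ and concludes $y + ix = a + b z_q \in \Ocal_K$.
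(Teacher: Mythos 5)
Your proof is correct and follows essentially the same route as the paper's: write $x=\lambda b$ and $y=n+2\lambda^2 k$, plug into the norm relation to get $b^2+qk^2+4nk+q=0$, and reduce modulo $2$ to force the parity compatibility (the paper phrases this as ``$h$ and $k$ have different parity'' and splits into two sub-cases, whereas you package both into the single identity $a=y-b/2\in\ZZ$, which is a cosmetic difference only). No gaps.
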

\begin{proof}
When $q=4,8$ this is immediate,
since $y$ is then an integer and $x$ is an integer multiple of $z_q$.
Assume $q$ is odd and write $x=\lambda h$ and $y=n+2\lambda^2k$
for some $h,k\in\ZZ$. Then
\[
n^2-4\lambda^4 = x^2 + y^2 = \lambda^2h^2 + n^2 + 4\lambda^2k + 4\lambda^4.
\]
Recalling $4\lambda^2=q$, this implies
\[
h^2 + qk^2 + 4kn + q = 0.
\]
Reducing modulo two, we deduce that $h$ and $k$ have
different parity. When $h$ is even and $k$ is odd, it follows that $y$ is an integer
and $x$ is an integer multiple of $\sqrt{q}$, say $x=x_0\sqrt{q}$, and therefore
$y+ix=y-x_0+2x_0z_q\in\Ocal_K$. When $h$ is odd and $k$ is even, then
$y$ is half an odd integer and $x$ is an odd multiple of $\lambda$, so in this case
$y+ix=y-x+xz_q\in\Ocal_K$.
\end{proof}

We can now prove the equality between the sets $L_n$ and $\Lcal_n$.

\begin{proposition}\label{prop:S2.6}
Let $z_q$ be as in \eqref{def:zq}, $n\in\Ncal_{z_q}$
and $L_n$, $\Lcal_n$ be the sets defined in \eqref{def:Ln} and \eqref{def:Lncurly}.
Then $L_n=\Lcal_n$.
In particular, if $c_n$ denotes the constant appearing in Proposition~\ref{intro:prop2},
we have
\[
|\Lcal_n| = 2c_n r_K(n^2-4\lambda^4).
\]
\end{proposition}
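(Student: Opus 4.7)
The plan is to prove the set equality $L_n = \Lcal_n$ by two inclusions and then compute $|\Lcal_n|$ via an involution/unit-action argument.

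\textbf{Inclusion $L_n \subseteq \Lcal_n$.} Starting from \eqref{def:xy} and the identity $(\mu c + d)^2 + \lambda^2 c^2 = N(cz_q + d) \in \ZZ$ (since $cz_q + d \in \Ocal_K$), one reads off $y_\gamma = n - 2\lambda^2 N(cz_q + d)$, hence $y_\gamma \equiv n \pmod{2\lambda^2}$. For the congruence on $x_\gamma$, expand the product in Lemma~\ref{lemma:S2.4} to obtain $x_\gamma = \lambda(rt - us) \in \lambda \ZZ$. Combined with the norm identity from Lemma~\ref{lemma:S2.2}, this shows $(x_\gamma, y_\gamma) \in \Lcal_n$.

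\textbf{Reverse inclusion.} Given $(x, y) \in \Lcal_n$, set $\xi = y + ix$; by Lemma~\ref{lemma:S2.5}, $\xi \in \Ocal_K$ with norm $(n + 2\lambda^2)(n - 2\lambda^2)$. Using that $\Ocal_K$ is a PID, I would produce a factorization $\xi = \alpha \bar\beta$ with $\alpha, \beta \in \Ocal_K$, $N(\alpha) = n + 2\lambda^2$, and $N(\beta) = n - 2\lambda^2$; existence follows because any common prime divisor of $n + 2\lambda^2$ and $n - 2\lambda^2$ in $\ZZ$ divides $q = 4\lambda^2$ and is hence ramified in $\Ocal_K$, so the prime decomposition of $(\xi)$ splits appropriately. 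Writing $\alpha = u + rz_q$ and $\beta = t + sz_q$ with $r, u, s, t \in \ZZ$, the norm identities give $ad - bc = 1$ for the quadruple defined by \eqref{2501:eq002}. The crux is to verify that the congruences of $\Lcal_n$ imply \eqref{2501:eq001}, so that $a, b, c, d \in \ZZ$ and the resulting $\gamma$ belongs to $\PSL(2, \ZZ)$; this requires a case analysis on $q$ and $n$ paralleling that in the proof of Proposition~\ref{intro:prop2}. Lemma~\ref{lemma:S2.4} then confirms $\phi(\gamma) = (x, y)$.

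\textbf{Cardinality.} Writing $\xi = A + Bz_q$ with $A, B \in \ZZ$, the congruence defining $\Lcal_n$ reduces to one congruence on $(A, B)$ modulo $q$ (or modulo $2\lambda^2$ when $q$ is even). The norm identity $4N(\xi) = (2A + 2B\mu)^2 + qB^2$ combined with $N(\xi) = n^2 - 4\lambda^4$ yields $(2A + 2B\mu)^2 \equiv (2n)^2 \pmod q$, so that every element of $\{\xi \in \Ocal_K : N(\xi) = n^2 - 4\lambda^4\}$ automatically satisfies a congruence of the form $\Re\xi \equiv \pm n$. The involution $\xi \mapsto -\xi$ (augmented by the action of $\Ocal_K^\times$ in the Eisenstein and Gaussian cases) is fixed-point-free on this set and flips the sign. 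When the two sign classes are distinct this gives $|\Lcal_n| = r_K(n^2 - 4\lambda^4)/2$, matching $c_n = 1/4$; when they coincide (the divisibility cases of Proposition~\ref{intro:prop2} where $c_n = 1/2$), all elements satisfy the congruence and $|\Lcal_n| = r_K(n^2 - 4\lambda^4)$. The hardest part is the factorization step in the reverse inclusion together with the verification of \eqref{2501:eq001}, for which one must carefully track how the parity/divisibility structure of $n$ modulo $q$ propagates through \eqref{def:rust}.
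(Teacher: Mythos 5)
Your overall strategy for the set equality mirrors the paper's: the forward inclusion via the norm identity of Lemma~\ref{lemma:S2.2} and the integrality of $N(d+cz_q)$, and the reverse inclusion by factoring $y+ix=z_1\overline{z_2}$ in the PID $\Ocal_K$ with $N(z_1)=n+2\lambda^2$, $N(z_2)=n-2\lambda^2$ (legitimate, since $\gcd(n+2\lambda^2,n-2\lambda^2)$ divides the ramified $q$), followed by a check of \eqref{2501:eq001}. The one genuine gap sits exactly at the step you yourself flag as the crux: you defer the verification of \eqref{2501:eq001} to ``a case analysis paralleling that in the proof of Proposition~\ref{intro:prop2}'', but that analysis cannot do the job. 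In Proposition~\ref{intro:prop2} the norm conditions alone pin down \eqref{2501:eq001} only up to a sign --- for $q$ odd they give $(r+2u)^2\equiv(s+2t)^2\pmod q$, hence $r+2u\equiv\pm(s+2t)$, and indeed only half of the pairs with the correct norms satisfy \eqref{3103:eq004} when $c_n=1/4$. To exclude the wrong sign for the specific pair produced by your factorization you must feed in the congruence $y\equiv n\pmod{2\lambda^2}$ from \eqref{def:Lncurly}: expanding the real part of $z_1\overline{z_2}$ as in \eqref{2701:eq010} gives $4y\equiv(r+2u)(s+2t)\pmod q$, and $4y\equiv 4n$ then forces the $+$ sign (with an analogous argument modulo $4$ for $q=8$, where $2rs+ut=y\equiv n$ yields $u\equiv t\pmod 4$). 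Your write-up names the right hypothesis (``the congruences of $\Lcal_n$'') but never uses it, so as it stands the reverse inclusion is not established.

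Your cardinality computation, by contrast, is a correct and genuinely different route. The paper deduces $|\Lcal_n|=|L_n|=\frac{2c_n}{|\Ocal_K^\times|}r_K(n+2\lambda^2)r_K(n-2\lambda^2)$ from Lemma~\ref{lemma:S2.4} and then applies the multiplicativity of $r_K/|\Ocal_K^\times|$; you instead count $\Lcal_n$ directly inside $\{\xi\in\Ocal_K:N(\xi)=n^2-4\lambda^4\}$, noting that the norm forces $2\operatorname{Re}\xi\equiv\pm 2n\pmod q$ and that the unit action splits each orbit evenly between the two sign classes, with the classes coalescing exactly when $(n^2-4\lambda^4,q)>1$, i.e.\ when $c_n=1/2$. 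This is in substance the paper's Lemma~\ref{lemma:S4.1} (the identity $|\Lcal_n|=r_K^\star(n^2-4\lambda^4)$), and it has the advantage of not requiring the set equality at all; the only point demanding care is the case $q=3$, where one must check that multiplication by a primitive sixth root of unity negates the class of $2\operatorname{Re}\xi$ modulo $3$ so that exactly half of the six units contribute.
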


\begin{proof}
Regarding the inclusion $L_n\subseteq \Lcal_n$, first of all we have
$x_\gamma^2+y_\gamma^2=n^2-4\lambda^4$ by Lemma~\ref{lemma:S2.2}.
Moreover, by the definition of $x_\gamma,y_\gamma$ we see that
\[
y_\gamma = n-2\lambda^2(|z_q|^2c^2+2\mu cd+d^2) \equiv n \pmod{2\lambda^2},
\]
since the quantity in parenthesis is an integer
(it is the norm in $\Ocal_K$ of $d+cz_q$). Similarly, we have
\[
\frac{x_\gamma}{\lambda}
=
2ac|z_q|^2 + 2bd + 2\mu(ad+bc) -2\mu(|z_q|^2c^2+2\mu cd+d^2),
\]
which is an integer since $2\mu$ is either zero or one and $a,b,c,d\in\ZZ$.
Therefore $L_n\subseteq \Lcal_n$.

Now for the reverse inclusion, let $(x,y)$ be a point in $\Lcal_n$.
By Lemma \ref{lemma:S2.5} we know that $y+ix\in\Ocal_K$.
It follows in particular that
$n^2-4\lambda^4$ is a norm in $\Ocal_K$.
From standard algebraic number theory,
this is equivalent with saying that, in the factorisation of $n^2-4\lambda^4$,
all the primes $p$ such that $\chi_{q}(p)=-1$ (with $\chi_q(\cdot)=\leg{-q}{\cdot}$)
appear with even exponent.
Note that $\gcd(n+2\lambda^2,n-2\lambda^2)$ divides $q$.
Therefore, the property of the factorisation
and of being a norm descends to $n\pm 2\lambda^2$. In other words, we can find algebraic integers $z_1,z_2\in\Ocal_K$ such that
\begin{equation}\label{2701:eq001}
y+ix = z_1\overline{z_2},\qquad N(z_1)=n+2\lambda^2,\;N(z_2)=n-2\lambda^2.
\end{equation}
Write $z_1=u+rz_q$ and $z_2=t+sz_q$, for some integers $r,u,s,t$.
We claim that the pair $(u+rz_q,t+sz_q)$ belongs to the
set $\Ccal_{z_q,n}$ defined in \eqref{def:Czqn},
to prove which we need to show that \eqref{2501:eq001} holds.

We distinguish on whether $q$ is even or odd.
The case $q=4$ is treated in \cite{chatzakos_distribution_2020},
so we do not discuss it here.

\emph{Case $q=8$.}
In this case \eqref{2501:eq001} reduces (see \eqref{3103:eq001}) to the two congruences
\begin{equation}\label{0804:eq010}
r\equiv s\!\!\!\pmod{2} \qquad u\equiv t\!\!\!\pmod{4}.
\end{equation}
If $n$ is even, Proposition \ref{intro:prop2} (see also its proof) implies that the condition
\eqref{0804:eq010} is automatically satified once we have the condition on the norms,
so there is nothing to prove.
Assume $n\equiv 1\!\!\pmod{4}$. Then from the norms we get
\[
u^2+2r^2\equiv 1\pmod{4},\quad t^2+2s^2\equiv 1\pmod{4},
\]
and so $r\equiv s\equiv 0\pmod{2}$ and $u\equiv t\equiv 1\pmod{2}$.
In order to show that in fact $u\equiv t\!\!\pmod{4}$ we note that
by \eqref{2701:eq001} we also have
\[
2rs+ut = y \equiv n\equiv 1\pmod{4}.
\]
Since $2rs\equiv0\!\!\pmod{4}$, this leads to $ut\equiv 1\!\!\pmod{4}$,
therefore $u\equiv t\!\!\pmod{4}$ and \eqref{0804:eq010} holds.
Assume now $n\equiv 3\!\!\pmod{4}$.
From the conditions on the norms we get
\[
u^2+2r^2\equiv 3\pmod{4},\quad 2s^2+t^2\equiv 3\pmod{4},
\]
and so $r,u,s,t$ are all odd.
To show $u\equiv t\!\!\pmod{4}$ we argue as before
and observe that
\[
2rs+ut = y \equiv n \equiv 3\pmod{4}.
\]
Since $2rs\equiv 2\!\!\pmod{4}$, it follows again $ut\equiv 1\!\!\pmod{4}$
and thus $u\equiv t\!\!\pmod{4}$, so \eqref{0804:eq010} holds in this case too.

\emph{Case $q$ odd.}
In this case \eqref{2501:eq001} simplifies (see \eqref{3103:eq004}) to
\begin{equation}\label{0904:eq001}
r+2u \equiv s+2t \pmod{q}.
\end{equation}
The condition on the norms tells us that
\[
(r+2u)^2 \equiv 4n\equiv (s+2t)^2 \pmod{q},
\]
so $r+2u\equiv \pm s+2t$ modulo $q$. If $q|2n$ then both sides are zero mod $q$
and so \eqref{0904:eq001} automatically holds.
If $q\nmid 2n$, assume $r+2u\equiv -(s+2t)$ mod $q$.
By looking at the real part in \eqref{2701:eq001}
and recalling that $y\equiv n$ modulo $2\lambda^2$, we have (cf.~\eqref{2701:eq010})
\[
4n \equiv rs + 4ut + 2\left(rt+us\right) \equiv (r+2u)(s+2t) \equiv -(r+2u)^2 \equiv -4n \pmod{q},
\]
a contradiction. Hence \eqref{0904:eq001} holds.

Finally, the identity for $|\Lcal_n|$ follows from Lemma \ref{lemma:S2.4}
and the fact that $\frac{r_k}{|\Ocal_K^\times|}$ is multiplicative (see \S\ref{S4.1}). 
\end{proof}

\begin{remark}\label{S2:rmk}
Proposition \ref{prop:S2.6} may fail in general.
For instance, if we consider $z_q=2i$ and $n=10$,
we have $L_n=\emptyset$ whereas $\Lcal_n=\{(0,-6)\}$,
so the equality $L_n=\Lcal_n$ is not true.
\end{remark}

By combining Lemma \ref{lemma:S2.3} and Proposition \ref{prop:S2.6},
we deduce we have a $\frac{1}{2}|\Ocal_K^\times|$-to-one
conformal map from $\{\gamma z_q:\gamma\in\Gamma_{z_q,n}\}$ to $\Lcal_n$.
In particular, the angles $\{\theta(\gamma):\gamma\in\Gamma_{z_q,n}\}$
can be equivalently described
as angles of $x+iy$, for $(x,y)\in\Lcal_n$
(each one repeated $\frac{1}{2}|\Ocal_K^\times|$ times),
which proves Proposition~\ref{intro:prop1}.

We conclude this section by rewriting the discrepancy
appearing in Theorem \ref{intro:thm1}
in terms of the angles $\theta(x+iy)$
rather than $\theta(\gamma)$.
Such a new formulation will be the starting point in the
proof of Theorem \ref{intro:thm1} in Section \ref{S4}.

\begin{corollary}\label{cor:S2.7}
Let $n\in\Ncal_{z_q}$ and $\Lcal_n$ as in \eqref{def:Lncurly}.
Then
\[
\sup_{I\subseteq S^1}\left|
\frac{1}{|\Gamma_{z_q,n}|} \sum_{\gamma\in\Gamma_{z_q,n}} \mathbf{1}_{\{\theta(\gamma)\in I\}} - \frac{|I|}{2\pi}
\right|
=
\sup_{I\subseteq S^1}\left|
\frac{1}{|\Lcal_n|} \sum_{(x,y)\in\Lcal_n} \mathbf{1}_{\{\theta(x+iy)\in I\}} - \frac{|I|}{2\pi}
\right|.
\]
\end{corollary}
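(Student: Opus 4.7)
The plan is to combine three previously established facts: the conformality of the map $f$ from \eqref{def:f}, the many-to-one map $\phi$ from $\Gamma_{z_q,n}$ to $L_n$ given by Lemma~\ref{lemma:S2.3}, and the equality $L_n=\Lcal_n$ from Proposition~\ref{prop:S2.6}. Together these will let me rewrite the left-hand counting sum as a scalar multiple of the right-hand counting sum, with the same scalar appearing in the denominator $|\Gamma_{z_q,n}|$; after cancelling and subtracting the common quantity $|I|/(2\pi)$, the two expressions under the supremum will coincide for every fixed $I$.

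The first step is to check that $\theta(\gamma)=\arg(x_\gamma+iy_\gamma)$. Differentiating \eqref{def:f}, one finds $f'(z_q)=i(z_q-\overline{z_q})/(z_q-\overline{z_q})^2=1/(2\lambda)$, which is a positive real. Thus $f$ introduces no rotation at $z_q$: the angle at $z_q$ between the geodesic to $\gamma z_q$ and the horizontal axis equals the angle at $0$ between its $f$-image and the real axis. But $f$ maps geodesics through $z_q$ to straight lines through $0$, so this latter angle is simply $\arg f(\gamma z_q)$. Since $\phi(\gamma)=(n+2\lambda^2)f(\gamma z_q)$ with $n+2\lambda^2>0$, it follows that
\[
\theta(\gamma)=\arg f(\gamma z_q)=\arg(x_\gamma+iy_\gamma).
\]

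Next, by Lemma~\ref{lemma:S2.3} the map $\phi:\Gamma_{z_q,n}\to L_n$ is $\tfrac{1}{2}|\Ocal_K^\times|$-to-one, and by Proposition~\ref{prop:S2.6} the image $L_n$ equals $\Lcal_n$. Combining this with the previous paragraph, for every interval $I\subseteq S^1$ we have
\[
\sum_{\gamma\in\Gamma_{z_q,n}}\mathbf{1}_{\{\theta(\gamma)\in I\}}
=\tfrac{1}{2}|\Ocal_K^\times|\sum_{(x,y)\in\Lcal_n}\mathbf{1}_{\{\theta(x+iy)\in I\}},
\]
and in particular, taking $I=S^1$, $|\Gamma_{z_q,n}|=\tfrac{1}{2}|\Ocal_K^\times|\,|\Lcal_n|$.

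Finally, dividing the two identities the factor $\tfrac{1}{2}|\Ocal_K^\times|$ cancels, so for each $I$ the normalised count on the two sides agrees. Subtracting $|I|/(2\pi)$ and taking the supremum over $I\subseteq S^1$ concludes the proof. The only delicate point is the verification that $f'(z_q)$ is a positive real, guaranteeing that no hidden rotation is introduced when reading off angles from $\phi(\gamma)$; everything else is routine bookkeeping with the multiplicities provided by Lemma~\ref{lemma:S2.3} and Proposition~\ref{prop:S2.6}.
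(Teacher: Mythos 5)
Your proof is correct and follows essentially the same route as the paper, which deduces the corollary directly from the $\tfrac{1}{2}|\Ocal_K^\times|$-to-one conformal correspondence of Lemma~\ref{lemma:S2.3} combined with the identification $L_n=\Lcal_n$ of Proposition~\ref{prop:S2.6}. Your explicit verification that $f'(z_q)=1/(2\lambda)>0$, so that $\theta(\gamma)=\arg(x_\gamma+iy_\gamma)$ with no hidden rotation, is a detail the paper leaves implicit under the word ``conformal,'' and it is checked correctly.
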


\section{Shifted $B$-numbers: Proof of Theorem \ref{intro:thm2}}\label{S3}

In this section we prove Theorem \ref{intro:thm2},
which is done by sieve theory methods, following the line of argument
adopted in \cite{indlekofer_scharfe_1974}.

The function $b_K(n)$ indicates whether or not $n$ is the norm of an element
in $\Ocal_K$. This is characterised starting from the behaviour on primes.
For fixed $K$, with discriminant $-q$,
we define two sets $\mathscr{D}_{1}$ and $\mathscr{D}_{-1}$ by
\[
\mathscr{D}_{\pm 1} := \{ n\in\NN:\; p|n \implies \chi_q(p)=\pm 1 \}.
\]
Here and in the rest of the paper
$\chi_q$ will denote the real character $\chi_q(\cdot)=\leg{-q}{\cdot}$.
An integer $n$ is then a norm if and only if the primes from $\mathscr{D}_{-1}$
in the factorisation of $n$ appear with even exponent.

The first step of the proof
consists in estimating from below the shifted convolution sum by a similar one
where we restrict to a suitable arithmetic progression and consider
only terms from the set $\mathscr{D}_{1}$.
More precisely, we start with the sum
\begin{equation}\label{def:Bxh}
B(x,h) := \sum_{n\leq x} b_K(n)b_K(n+h).
\end{equation}
Note that we can assume that $h$ is coprime with $q$. Indeed,
if this is not the case and $h=q^\alpha h'$ with $(h',q)=1$
(or $h=2^\alpha h'$ with $h'$ odd in the case $q=4,8$),
then by restricting to $n=q^\alpha n'$
and observing that $b_K(q^\alpha m)=b_K(m)$ for any $m\in\NN$ gives
\[
B(x,h) \geq \sum_{n'\leq x'} b_K(n')b_K(n'+h'),
\]
where $x'=xq^{-\alpha}$, which shows it suffices to lower bound
the sum on the right, where the shift $h'$ is now coprime to $q$.
Replacing $q^\alpha$ by $2^\alpha$
leads to a similar inequality in the case $q=4,8$.

Next, we assume for a moment that $q$ is odd and restrict the
summation in \eqref{def:Bxh} to a suitable arithmetic progression.
Up to replacing $h$ by $-h$ and interchanging the roles of $n$ and $n+h$,
we can assume that $h$ is a quadratic residue modulo $q$.
Once we have this, we choose $n\equiv q\!\!\pmod{q^2h}$.
If $h$ is odd, we also impose $n\equiv 4\!\!\pmod{8}$.
These choices imply $(n,n+h)=(q,h)=1$ and in particular
\[
b_K(n)b_K(n+h) = b_K(4^{-\sigma}q^{-1}n(n+h)),
\]
where $\sigma=0,1$ according to whether $h$ is even or odd, respectively,
so that the argument on the right is always odd.
Similarly, when $q=4,8$, we can assume that $h\equiv 1\!\!\pmod{4}$
or $h\equiv 1,3\!\!\pmod{8}$. Then we take $n\equiv 4q\!\!\pmod{4q^2h}$
so we can write again $b_K(n)b_K(n+h)=b_K(4^{-\sigma}q^{-1}n(n+h))$.

Write $n=n_1j+n_0$ for some integers $n_1,n_0\in\NN$
and $j\leq y:=n_1^{-1}(x-n_0)$. By further restricting to elements
in $\mathscr{D}_{1}$, we deduce that
\[
B(x,h)
\geq
\sum_{j\leq y} b_K(4^{-\sigma}q^{-1}(n_1j+n_0)(n_1j+n_0+h))
\geq
B^*(y,h),
\]
where
\begin{equation}\label{def:Bstar}
B^*(y,h) := |\{j\leq y:\;4^{-\sigma}q^{-1}(n_1j+n_0)(n_1j+n_0+h)\in\mathscr{D}_{1}\}|.
\end{equation}

Since $y\asymp x$, Theorem \ref{intro:thm2} will follow
from a lower bound for $B^*(y,h)$ of the right order of magnitude,
as stated in the proposition below.

\begin{proposition}\label{S3:prop}
Let $B^*(y,h)$ be the cardinality defined in \eqref{def:Bstar}. Then
\[
B^*(y,h) \gg_{K,h} \frac{y}{\log y}.
\]
\end{proposition}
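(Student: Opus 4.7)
I would follow the sieve-theoretic strategy of Indlekofer \cite{indlekofer_scharfe_1974}, extended from $\QQ(i)$ to our field $K$. Write $L_1(j) := n_1 j + n_0$ and $L_2(j) := n_1 j + n_0 + h$, so that $F(j) = L_1(j) L_2(j)/(4^\sigma q)$. Because the arithmetic progression has already been chosen to handle the ramified primes, the condition $F(j) \in \mathscr{D}_1$ is equivalent to the requirement that no prime $p$ with $\chi_q(p) = -1$ divides $L_1(j) L_2(j)$. Proposition \ref{S3:prop} thus reduces to a sieve problem on the polynomial sequence $\mathcal{A} = (L_1(j) L_2(j))_{j \leq y}$, sifted by the set $\mathcal{P}^- := \{p : \chi_q(p) = -1\}$ of inert primes.

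For the sieve parameters, each $p \in \mathcal{P}^-$ coprime to $n_1 h$ contributes exactly two forbidden residues $j \bmod p$, so the local density is $\omega(p) = 2$. Since $\mathcal{P}^-$ has Dirichlet density $1/2$, one has $\sum_{p \leq z,\, p \in \mathcal{P}^-} \omega(p) \log p / p = \log z + O(1)$, placing us in the linear sieve regime ($\kappa = 1$). For squarefree $d$ coprime to $n_1$, the congruence $L_1(j) L_2(j) \equiv 0 \pmod d$ has at most $2^{\omega(d)}$ solutions modulo $d$, which yields a level of distribution $D = y^{1-\epsilon}$ unconditionally.

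Applying the Rosser--Iwaniec linear sieve at level $z = y^{1/s}$ for some fixed $s > 2$ then gives
\[
|\{j \leq y : \gcd(L_1(j) L_2(j),\, \mathbf{P}^-(z)) = 1\}| \gg_K y \prod_{\substack{p \leq z \\ p \in \mathcal{P}^-}}\!\!\left(1 - \frac{2}{p}\right) \gg_K \frac{y}{\log y},
\]
where $\mathbf{P}^-(z) := \prod_{p \leq z,\, p \in \mathcal{P}^-} p$. To upgrade this to the stronger property that $L_1(j) L_2(j)$ has no inert prime divisor whatsoever, I would apply a weighted lower bound sieve following Indlekofer: its weights enforce that each $j$ contributing positively is genuinely free of inert prime factors, via a Buchstab-type iteration combined with Brun--Titchmarsh-type upper bounds for each potential large inert divisor. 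Since $L_1(j) L_2(j) \leq y^{2+o(1)}$ admits only boundedly many prime divisors above $z$, the iteration terminates after a bounded number of steps.

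The main obstacle is precisely this last step. A naive union bound on inert primes $p \in (z, y^2]$ dividing $L_1(j) L_2(j)$ is too lossy, since $\sum_{p > z,\, p \in \mathcal{P}^-} y/p$ is of order $y$ uniformly in $z$, swamping the main term of size $y/\log y$. Indlekofer's weighted variant circumvents this by folding the large-prime contributions directly into the sieve coefficients, producing net positive weight on exactly the $j$'s we seek. Adapting his analysis to a general class-number-one field $K$ is essentially notational: the density of $\chi_q$, the behaviour at the ramified primes dividing $q$, and the distribution of $\mathcal{A}$ in arithmetic progressions all carry over unchanged.
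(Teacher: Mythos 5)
Your first half matches the paper's setup: the sifting set of inert primes, local density $\omega(p)=2$, sieve dimension $\kappa=1$ from the density $1/2$ of $\{p:\chi_q(p)=-1\}$, and the linear sieve lower bound $A(M,y^{1/s})\gg y/\log y$ for $s>2$ are exactly Lemma \ref{S3:lemma1} and \eqref{2503:eq005}. You also correctly identify the real difficulty: upgrading ``no inert prime below $z$'' to ``no inert prime at all'' cannot be done by a union bound over large inert primes. But your resolution of that difficulty is where the proof is missing. A ``weighted lower bound sieve'' of the kind you describe produces almost-primes (integers with few prime factors), not integers free of a prescribed \emph{class} of primes, and ``folding the large-prime contributions into the sieve coefficients'' is a restatement of the goal rather than a mechanism. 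The remark that $L_1(j)L_2(j)$ has boundedly many prime factors above $z$ only shows that a putative iteration has boundedly many terms; it does not show that any single term is smaller than the main term, which is precisely what the union bound fails to deliver. A Brun--Titchmarsh bound alone saves one logarithm, which is not enough: each correction term would still be of size $\asymp y/\log y$.

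The argument that actually closes this gap (the paper's, following Indlekofer) rests on three concrete ingredients absent from your proposal. First, the arithmetic progression forces $\chi_q(m_i)=1$ for each factor $m_i=4^{-\sigma}q^{-1}(n_1j+n_0)$ resp.\ $n_1j+n_0+h$, so each factor has an \emph{even} number of inert prime divisors; after removing inert primes below $z=y^{1/s}$ with $2<s<5/2$, each factor carries exactly $0$ or $2$ inert primes above $z$, and the decomposition $A(M,z)=B^*(y,h)+\mathcal{S}_1+\mathcal{S}_2+\mathcal{S}_3$ in \eqref{2503:eq003} is \emph{exact}, with only the ``two large inert primes'' and ``four large inert primes'' configurations appearing. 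Without this parity input, a factor with a single large inert prime would contribute an amount comparable to the main term uniformly as $s\to 2$. Second, each error term is bounded by a second, \emph{upper-bound} sieve (Lemma \ref{S3:lemma3}, from \cite[Theorem 4.2]{halberstam_sieve_1974}) applied to the linear form $ap+h$ in the prime variable $p$: demanding that $ap+h$ be free of inert primes below $z$ gains an extra $(\log)^{-1/2}$ on top of the $(\log)^{-1}$ from $p$ being prime. Third, once two primes exceeding $y^{1/s}$ have been extracted, the split cofactor is confined to $m\leq y^{1-2/s}$, and $\sum_{m\in\mathscr{D}_1,\,m\leq y^{1-2/s}}1/m\ll\sqrt{s-2}\,(\log y)^{1/2}$; this is the source of the factor $\sqrt{s-2}$ in \eqref{2503:eq004} that lets the main term $\gg\frac{y}{\log y}\log(s-1)$ win once $s$ is taken close enough to $2$. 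These three steps are the content of the proof, not notation, and none of them follows from what you have written.
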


Let $z>2$ and consider the product of primes
\[
P_q(z) := \prod_{\substack{p<z\\p\in\mathscr{D}_{-1}}} p.
\]
Define also the set
\begin{equation}\label{def:M}
M := \{4^{-\sigma}q^{-1}(n_1j+n_0)(n_1j+n_0+h):\; j\leq y\}
\end{equation}
and the function
\begin{equation}\label{def:AMz}
A(M,z) := |\{m\in M:\; (m,P_q(z))=1\}|.
\end{equation}
The quantity $A(M,z)$ is of a shape suitable to be attacked with sieve theory
and should be viewed as an approximation to $B^*(y,h)$.
When $z$ is large enough (say $z\gg y^{1/2}$),
we expect $B^*(y,h)\asymp A(M,z)$, so that Proposition \ref{S3:prop}
would follow if we could produce a lower bound for $A(M,z)$.
To make this rigorous, we will compare $B^*(y,h)$ and $A(M,z)$
for $z$ chosen as a power of $y$ slightly smaller than $1/2$ and will control
explicitly what happens with the bad primes $p\in\mathscr{D}_{-1}$
in the range $z\leq p\ll y^{1/2}$.

Before diving into the proof of Proposition \ref{S3:prop}, we discuss
three auxiliary results. The first one (Lemma \ref{S3:lemma1})
is the linear sieve, which we present in a simplified form,
specialized to our set $M$ above.
The second result (Lemma \ref{HB-lemma}) is an upper bound
for a sieve of polynomials at primes, which can be derived from Selberg's sieve.
We use such an upper bound to prove our third result, Lemma \ref{S3:lemma3}.

\begin{lemma}\label{S3:lemma1}
Let $y\geq z$ be positive real numbers and
let $s=\frac{\log y}{\log z}$. Assume that $2\leq s\leq 4$.
Let $M$ and $A(M,z)$ be as in \eqref{def:M} and \eqref{def:AMz}.
Then
\[
A(M,z)\geq y \prod_{\substack{2<p<z\\p\in\mathscr{D}_{-1}}} \left(1-\frac{2}{p}\right)\left\{\frac{2e^\gamma}{s}\log(s-1)+O\left(\frac{1}{\log y}\right)\right\},
\]
where $\gamma$ is Euler's constant.
\end{lemma}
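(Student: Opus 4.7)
The plan is to apply the linear sieve (in the form of Jurkat--Richert) to the polynomial sequence underlying $M$. Write $P(j) = 4^{-\sigma}q^{-1}(n_1 j+n_0)(n_1 j+n_0+h)$, so $M=\{P(j):j\leq y\}$ with $|M|=y$, and $A(M,z)$ counts those $j\leq y$ for which $P(j)$ has no prime divisor in $\mathscr{D}_{-1}\cap[1,z)$.

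First I would establish the local densities. For any odd prime $p\in\mathscr{D}_{-1}$ with $p\nmid n_1 h$, the polynomial $(n_1 j+n_0)(n_1 j+n_0+h)$ factors as a product of two linear forms having distinct roots modulo $p$ (since $p\nmid h$); hence the number of $j\leq y$ with $p\mid P(j)$ equals $(2/p)\,y + r_p$ with $|r_p|\leq 2$. Thus the sieving density is $\omega(p)=2$. Using Dirichlet's theorem to restrict the prime number theorem to primes with $\chi_q(p)=-1$ (which have natural density $\tfrac12$), one has
\[
\sum_{\substack{p<z\\p\in\mathscr{D}_{-1}}}\frac{\omega(p)\log p}{p} = \log z + O(1),
\]
confirming that the problem has sieve dimension $\kappa=1$, i.e.\ is a linear sieve.

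Next I would invoke the linear sieve with level of distribution $D=y(\log y)^{-A}$ for a sufficiently large constant $A$. For squarefree $d\mid P_q(z)$ the remainder satisfies $|r_d|\leq 2^{\omega(d)}$, whence
\[
\sum_{\substack{d<D\\d\mid P_q(z)}}|r_d|\ll D(\log D)^{O(1)} = o\Bigl(\frac{y}{\log y}\Bigr).
\]
The Jurkat--Richert theorem then yields
\[
A(M,z)\geq y\,V(z)\bigl(f(s)+o(1)\bigr),\qquad V(z)=\prod_{\substack{p<z\\p\in\mathscr{D}_{-1}}}\Bigl(1-\frac{\omega(p)}{p}\Bigr),
\]
where $f$ is the linear sieve lower bound function. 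On the range $2\leq s\leq 4$ the Jurkat--Richert formula gives the closed form $f(s)=(2e^\gamma/s)\log(s-1)$, and a quantitative version of the sieve (as in Iwaniec's formulation) sharpens the error to the $O(1/\log y)$ claimed in the lemma; the discrepancy between $\log D/\log z$ and $\log y/\log z$ is absorbed at the same level.

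Finally, the contribution to $V(z)$ of $p=2$ (if it lies in $\mathscr{D}_{-1}$) together with the finitely many primes dividing $2qn_1h$ is a constant depending only on $K$ and $h$, and may be absorbed into the implicit constant to match the stated product restricted to odd $p\in\mathscr{D}_{-1}$ with the factor $(1-2/p)$. The main obstacle, as usual for linear sieve applications, is bookkeeping: choosing the level of distribution close enough to $y$ that $s=\log y/\log z$ can be used in the final formula, and tracking the remainder sum finely enough to meet the stated $O(1/\log y)$ error rather than the cruder $O((\log y)^{-1/3})$ that falls out of the black-box sieve.
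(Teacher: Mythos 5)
Your proposal follows essentially the same route as the paper: both identify the sieve density $2/p$ at odd primes of $\mathscr{D}_{-1}$ not dividing $h$ (noting that the elements of $M$ are odd and that $h\mid n_1$ by construction), verify the linear-sieve hypotheses, and apply the linear sieve lower bound with $f(s)=\tfrac{2e^\gamma}{s}\log(s-1)$ on $[2,4]$, the paper obtaining the $O(1/\log y)$ error directly from Theorem 12.14 of Friedlander--Iwaniec since $H(s)$ is bounded on the compact interval and $\log z\gg\log y$. One small correction to your last step: the finitely many factors at primes $p\mid h$ cannot be \emph{absorbed into an implicit constant} (the main term carries none), but since each factor $\bigl(1-\tfrac{2}{p}\bigr)$ is at most $1$ and the claim is a lower bound, extending the product over these primes only weakens the right-hand side, which is exactly how the paper closes the argument.
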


\begin{proof}
The result is a consequence of \cite[Theorem 12.14]{friedlander_opera_2010},
let us explain briefly why (for comparison, see also
\cite[Theorem 8.4]{halberstam_sieve_1974} and \cite[Lemma 1]{indlekofer_scharfe_1974},
where the term $(\log y)^{-1}$ appears with a worse exponent).
For squarefree odd integers $d$ coprime with $qh$, the equation
\begin{equation}\label{2503:eq006}
4^{-\sigma}q^{-1}(n_1j+n_0)(n_1j+n_0+h)\equiv 0 \pmod{d}
\end{equation}
has $2^{\omega(d)}$ solutions, where $\omega(d)$ is the number of prime factors of $d$.
Thus, the congruence sums satisfy
\begin{equation}\label{2503:eq001}
A_d = |\{m\in M: m\equiv 0\pmod{d}\}| = \frac{y}{d}2^{\omega(d)} +|\theta| 2^\omega(d),\qquad |\theta|\leq 1.
\end{equation}
On the other hand, if $d$ is even or if there is a prime in $\mathscr{D}_{-1}$
dividing both $d$ and $h$, there are no solutions
(since the quantity on the left in \eqref{2503:eq006}
is always odd and $h$ divides $n_1$ by construction).
Our sifting range $P_q(z)$ contains only primes from $\mathscr{D}_{-1}$,
so the density function is defined on primes by $g(2)=0$ and, for $p\neq 2$,
\[
g(p)=
\begin{cases}
2/p & p\in\mathscr{D}_{-1},\;p\nmid h,\\
0 & \text{otherwise.}
\end{cases}
\]
We see by \eqref{2503:eq001} that \cite[(12.74)]{friedlander_opera_2010}
is satisfied by the function $g$, while by the prime number theorem in arithmetic progressions
we also have that $g$ satisfies \cite[(12.48)]{friedlander_opera_2010}.
Therefore, the hypotheses of \cite[Theorem 12.14]{friedlander_opera_2010} are satisfied
and we obtain
\[
A(M,z) \geq y\prod_{\substack{2<p<z\\p\in\mathscr{D}_{-1}\\p\nmid h}} \left(1-\frac{2}{p}\right)
\left\{f(s) + O\left(\frac{H(s)}{\log z}\right)\right\}.
\]
In particular, the inequality is valid when $s\in[2,4]$;
$f(s)$ is equal to $\frac{2e^\gamma}{s}\log(s-1)$ in this interval (see e.g.~\cite[(12.2)]{friedlander_opera_2010});
$H(s)$ is uniformly bounded for $s\in[2,4]$ (as it is a continuous function \cite[(12.39)]{friedlander_opera_2010}
on a compact set).
Upon extending the product to the odd primes dividing $h$
and noting that $\log z\gg \log y$, we obtain the lemma.
\end{proof}

\begin{lemma}[{\cite[Theorem 4.2]{halberstam_sieve_1974}}]\label{HB-lemma}
Let $F$ be a polynomial of degree $g\geq 1$ with integer coefficients.
For each prime $p$, denote by $\rho(p)$ the number of solutions of $F(n)\equiv 0\!\!\pmod{p}$.
Then, for any set of primes $\Pcal$, we have
\[
|\{p:\; p\leq x,\; (F(p),\Pcal)=1\}|
\ll
\prod_{\substack{p<x\\p\in\Pcal}} \left(1-\frac{\rho(p)}{p}\right)
\prod_{\substack{p<x\\p\in\Pcal\\p|F(0)}} \left(1-\frac{1}{p}\right)^{-1} \frac{x}{\log x},
\]
where the implied constant depends only on $g$.
\end{lemma}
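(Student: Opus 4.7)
The plan is to apply Selberg's upper bound sieve to the sequence $\mathcal{A} = \{F(p) : p \leq x,\; p \text{ prime}\}$, sifted by the primes in $\Pcal$. First I would identify the local densities: for each prime $\ell$, the condition $\ell \mid F(p)$ forces $p$ into one of $\rho(\ell)$ residue classes modulo $\ell$, but the class $p \equiv 0 \pmod{\ell}$ (a genuine root only when $\ell \mid F(0)$) contributes at most the single prime $p = \ell$. Hence the effective sifting density on primes is
\[
\rho^*(\ell) := \rho(\ell) - \mathbf{1}_{\ell\mid F(0)},
\]
extended multiplicatively to squarefrees. By the prime number theorem in arithmetic progressions, for squarefree $d$ built from primes of $\Pcal$ one has
\[
|\mathcal{A}_d| = |\{p\leq x : d\mid F(p)\}| = \frac{\rho^*(d)}{\phi(d)}\pi(x) + r_d.
\]

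Feeding these congruence sums into the Selberg $\Lambda^2$-sieve, whose dimension is bounded by the degree $g$ of $F$, yields
\[
|\{p \leq x : (F(p), P_\Pcal(z)) = 1\}|
\ll
\frac{\pi(x)}{G(z)} + \sum_{d < z^2}\mu^2(d)\, 3^{\omega(d)}|r_d|,
\]
with $G(z)^{-1} \asymp \prod_{\ell < z,\,\ell\in\Pcal}\bigl(1-\rho^*(\ell)/(\ell-1)\bigr)$. Choosing $z$ just below $x^{1/4}$ and invoking Bombieri--Vinogradov absorbs the remainder term into the main term, leaving a bound of the shape $x/\log x$ times this reciprocal product.

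Finally I would rewrite the local factors so that $\rho^*$ and the denominator $\ell-1$ disappear from the final form. For $\ell \mid F(0)$ one has the exact identity
\[
1 - \frac{\rho^*(\ell)}{\ell-1} = \Bigl(1-\frac{\rho(\ell)}{\ell}\Bigr)\Bigl(1-\frac{1}{\ell}\Bigr)^{-1},
\]
while for $\ell \nmid F(0)$ the factor $1 - \rho(\ell)/(\ell-1)$ agrees with $1-\rho(\ell)/\ell$ up to a factor $1+O(\ell^{-2})$, whose product over all $\ell$ converges to an absolute constant. These substitutions recast the upper bound in precisely the form stated in the lemma.

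The main obstacle is the uniform control of the remainders $r_d$ across the full sifting range $d < z^2$: a pointwise appeal to the prime number theorem in arithmetic progressions with modulus $d$ is far too weak, so one must invoke a mean-value estimate such as Bombieri--Vinogradov. Once this analytic input is in hand, every other step is a routine specialization of Selberg's upper bound sieve to the sequence $\{F(p)\}_{p\leq x}$.
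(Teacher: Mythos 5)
The paper does not prove this lemma: it is quoted directly from \cite[Theorem 4.2]{halberstam_sieve_1974}, and the proof given there is precisely the argument you outline --- Selberg's upper-bound sieve applied to the sequence $\{F(p)\}_{p\le x}$ with sifting density $\rho^*(\ell)/\phi(\ell)$, the remainder sum over moduli $d<z^2\approx x^{1/2-\epsilon}$ controlled by the Bombieri--Vinogradov theorem rather than a pointwise prime number theorem in progressions. Your sketch is sound, including the exact local-factor identity $1-\rho^*(\ell)/(\ell-1)=(1-\rho(\ell)/\ell)(1-1/\ell)^{-1}$ for $\ell\mid F(0)$; the only step left implicit is passing from the sieve product over $p<z$ to the stated product over $p<x$, which costs only a factor $\prod_{z\le p<x}(1-\rho(p)/p)^{-1}\ll_g 1$ since $\rho(p)\le g$.
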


Using the above lemma we can prove the following result which is tailored to our later applications.

\begin{lemma}\label{S3:lemma3}
Let $h\in\ZZ$, $h\neq 0$, and
$a=4^\sigma mp_0$, with $m\in\mathscr{D}_{1}$, $p_0\in\mathscr{D}_{-1}$,
$p_0$ prime and $\sigma\in\{0,1\}$. Set
\[
M^* := \{n=ap+h:\;p\leq x/a,\;p \text{ prime}\}.
\]
Then for $2\leq \sqrt{x/a}\leq z\leq x/a$ we have
\[
|\{n\in M^*:\; (n,P_q(z))=1\}|
\ll_h \frac{x}{a}\left(\log\frac{x}{a}\right)^{-3/2}.
\]
The implied constant depends on $h$ but is independent of $a,x$ and $z$.
Furthermore, the result still holds when $2\in\mathscr{D}_{-1}$ and $P_q(z)$ is replaced by $2^{-1}P_q(z)$.
\end{lemma}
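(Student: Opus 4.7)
The plan is to apply the upper-bound sieve of Lemma \ref{HB-lemma} directly to the linear polynomial $F(n) = an + h$, sifting by the primes $\ell \in \mathscr{D}_{-1}$ with $\ell < z$; denote this sifting set by $\mathcal{P}$. Since the prime factors of $P_q(z)$ are exactly the elements of $\mathcal{P}$, the quantity we must bound is $|\{p \leq x/a : (F(p), \mathcal{P}) = 1\}|$. We may assume from the outset that no $\ell \in \mathcal{P}$ divides $\gcd(a,h)$, since otherwise $F(p) \equiv 0 \pmod{\ell}$ for every $p$ and the count is trivially zero.

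Next I compute the local densities. For $\ell \in \mathcal{P}$ with $\ell \nmid a$, the linear congruence $a\xi \equiv -h \pmod{\ell}$ has a unique solution, so $\rho(\ell) = 1$. For $\ell \in \mathcal{P}$ with $\ell \mid a$ (and therefore $\ell \nmid h$ by our assumption), there is no solution, so $\rho(\ell) = 0$. Since $a = 4^\sigma m p_0$ with $m \in \mathscr{D}_1$, the only primes in $\mathscr{D}_{-1}$ that can divide $a$ are $p_0$ and, when $2 \in \mathscr{D}_{-1}$ and $\sigma = 1$, the prime $2$: a uniformly bounded number of primes, each contributing a factor $(1-1/\ell)^{-1} \leq 2$ when we drop them from the product. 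Similarly, the correction factor $\prod_{\ell \in \mathcal{P},\, \ell \mid h}(1 - 1/\ell)^{-1}$ appearing in Lemma \ref{HB-lemma} is $\ll_h 1$, since $h$ has only finitely many prime divisors. Plugging all of this into Lemma \ref{HB-lemma} (applied with $x/a$ in place of $x$) yields
\[
|\{p \leq x/a : (F(p), \mathcal{P}) = 1\}| \ll_h \prod_{\substack{\ell < z \\ \ell \in \mathscr{D}_{-1}}}\left(1 - \frac{1}{\ell}\right) \cdot \frac{x/a}{\log(x/a)}.
\]

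The remaining analytic ingredient is a Mertens-type estimate for primes with $\chi_q(\ell) = -1$, namely
\[
\prod_{\substack{\ell < z \\ \ell \in \mathscr{D}_{-1}}}\left(1 - \frac{1}{\ell}\right) \asymp \frac{1}{(\log z)^{1/2}},
\]
which is a standard consequence of the non-vanishing of $L(1, \chi_q)$ and Mertens' theorem, via the asymptotic $\sum_{\ell < z,\, \chi_q(\ell) = -1}\ell^{-1} = \tfrac{1}{2}\log\log z + O_q(1)$. Using the hypothesis $z \geq \sqrt{x/a}$ one has $\log z \geq \tfrac{1}{2}\log(x/a)$, hence $(\log z)^{-1/2} \ll (\log(x/a))^{-1/2}$, and combining gives the required bound $\ll_h (x/a)(\log(x/a))^{-3/2}$. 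The last sentence of the statement, concerning the replacement of $P_q(z)$ by $2^{-1}P_q(z)$, follows in the same way: removing the single prime $2$ from the sifting set only perturbs the implied constant. The only point requiring real care is verifying that the dependence on $a$ in the local product is absorbed into an $O_h(1)$ constant, which works precisely because $a$ introduces at most two additional $\mathscr{D}_{-1}$-prime factors beyond those dividing $h$.
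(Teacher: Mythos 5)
Your proposal is correct and follows essentially the same route as the paper: apply Lemma \ref{HB-lemma} to $F(n)=an+h$ with the sifting set $\{\ell\in\mathscr{D}_{-1}:\ell<z\}$, observe that $\rho(\ell)=1$ except at the boundedly many $\mathscr{D}_{-1}$-primes dividing $a$ or $h$, and convert $\sum_{\ell<z,\,\chi_q(\ell)=-1}\ell^{-1}=\tfrac12\log\log z+O(1)$ into the saving $(\log(x/a))^{-1/2}$ via $z\geq\sqrt{x/a}$. The only cosmetic difference is that you quote the Mertens-type product directly while the paper bounds $\prod(1-\rho(p)/p)\leq\exp(-\sum\rho(p)/p)$; your explicit reduction to the case $\gcd(a,h)$ free of $\mathscr{D}_{-1}$-primes below $z$ is a clean way to handle the degenerate cases.
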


\begin{proof}
The quantity we want to bound is
\[
A(M^*,z):=|\{p\leq \frac{x}{a}:\;(ap+h,P_q(z))=1\}|.
\]
When $p_0<z$, we may assume that $p_0\nmid h$ for otherwise this cardinality is zero.
We apply Lemma \ref{HB-lemma} with the linear polynomial $F(n)=an+h$,
the set of primes $\Pcal=\{p\in\mathscr{D}_{-1}:\;p<z\}$ and with $x/a$ in place of $x$,
obtaining
\[
A(M^*,z)
\ll
\prod_{\substack{p<z\\p\in\mathscr{D}_{-1}}} \left(1-\frac{\rho(p)}{p}\right)
\prod_{\substack{p<z\\p\in\mathscr{D}_{-1}\\p|h}} \left(1-\frac{1}{p}\right)^{-1} \frac{x}{a\log (x/a)}.
\]
By extending the second product to all the divisors of $h$ we obtain,
with an implicit constant that depends only on $h$,
\begin{equation}\label{1903:eq001}
A(M^*,z) \ll_h
\prod_{\substack{p<z\\p\in\mathscr{D}_{-1}}} \left(1-\frac{\rho(p)}{p}\right) \frac{x}{a\log (x/a)}.
\end{equation}
As for the value of $\rho(p)$, we observe that $F(n)\equiv 0\!\!\pmod{p}$
has one solution for all $p<z$, $p\in\mathscr{D}_{-1}$, unless $p=p_0$,
in which case it has no solutions, or $p=2$, $\sigma=1$ and $h$ is even,
in which case again there are no solutions.
Therefore, by the prime number theorem in arithmetic progressions
and using that $z\geq \sqrt{x/a}$, we have
\[
\sum_{\substack{p<z\\p\in\mathscr{D}_{-1}}} \frac{\rho(p)}{p}
=
\sum_{\substack{p<z\\p\in\mathscr{D}_{-1}}} \frac{1}{p} + O(1)
\geq
\frac{1}{2}\log\log \frac{x}{a} + O(1).
\]
As a consequence, we deduce
\[
\prod_{\substack{p<z\\p\in\mathscr{D}_{-1}}} \biggl(1-\frac{\rho(p)}{p}\biggr)
\ll
\exp\biggl(-\frac{1}{2}\log\log \frac{x}{a}\biggr)\ll \biggl(\log \frac{x}{a}\biggr)^{-1/2}.
\]
Inserting this in \eqref{1903:eq001} gives the desired estimate.
The final part of the lemma follows by the same proof with minor modifications.
\end{proof}

\begin{proof}[Proof of Proposition \ref{S3:prop}]
In order to show that $B^*(y,h)$ is large, we wish to show
that there are many elements of $\mathscr{D}_{1}$
in the set $M$ defined in \eqref{def:M}. To approach this, we consider
sets that approximate $\mathscr{D}_{1}$: for $z>2$ and any $l\geq 1$, define
\[
\begin{gathered}
\mathscr{D}_{1}(z) := \{m\in\NN:\; p|m,\;p\text{ prime}, p<z \Rightarrow p\in\mathscr{D}_{1}\},\\
\mathscr{D}_{1l}(z) := \{n=m\prod_{i=1}^{l}p_i:\; m\in\mathscr{D}_{1},\;p_i>z, \;p_i\text{ prime}, p_i\in\mathscr{D}_{-1}\;\forall\; i=1,\dots,l\}.
\end{gathered}
\]
We then have the inclusions, for every $l\geq 1$ and $z_1<z_2$,
\begin{equation}\label{2503:eq002}
\mathscr{D}_{1l}(z) \subseteq \mathscr{D}_{1}(z),
\quad
\mathscr{D}_{1}\subseteq \mathscr{D}_{1}(z_1) \subseteq \mathscr{D}_{1}(z_2).
\end{equation}
Pick $z=y^{1/s}$ with $2<s<5/2$. Let $A(M,z)$ be the quantity defined in \eqref{def:AMz}.
By looking at the size of possible harmful primes, we see that
\begin{equation}\label{2503:eq003}
A(M,z) = B^*(y,h)
\;+ \sum_{\substack{m\in M:\\m\in\mathscr{D}_{12}(y^{1/s})}} \!\!\!\!1
\;\;+ \sum_{\substack{m\in M:\\m\in\mathscr{D}_{14}(y^{1/s})}} \!\!\!\!1.
\end{equation}
Now, $m$ is the product of two integers, $4^{-\sigma}q^{-1}(n_1j+n_0)=:m_1$ and $n_1j+n_0+h=:m_2$, say.
Because of our choice of arithmetic progression,
we have $\chi_q(m_1)=\chi_q(m_2)=1$.
Therefore, primes from $\mathscr{D}_{-1}$ will divide each of $m_1$ and $m_2$ in pairs.
In turn, this means that the sums above decompose further as
\[
\sum_{\substack{j\leq y:\\4^{-\sigma}q^{-1}(n_1j+n_0)\in\mathscr{D}_{12}(y^{1/s})\\n_1j+n_0+h\in\mathscr{D}_{1}}} \!\!\!\! 1
\quad+
\sum_{\substack{j\leq y:\\4^{-\sigma}q^{-1}(n_1j+n_0)\in\mathscr{D}_{1}\\n_1j+n_0+h\in\mathscr{D}_{12}(y^{1/s})}} \!\!\!\!\!\! 1
\quad+
\sum_{\substack{j\leq y:\\4^{-\sigma}q^{-1}(n_1j+n_0)\in\mathscr{D}_{12}(y^{1/s})\\n_1j+n_0+h\in\mathscr{D}_{12}(y^{1/s})}} \!\!\!\!\! 1.
\]
Denote the three sums by $\Scal_1,\Scal_2,\Scal_3$, respectively.
Treating first $\Scal_1$ and $\Scal_3$, we can bound, recalling the inclusions in \eqref{2503:eq002}
\[
\Scal_1+\Scal_3\ll\!\!\!\!\!\!\!\!
\sum_{\substack{j\leq y:\\4^{-\sigma}q^{-1}(n_1j+n_0)\in\mathscr{D}_{12}(y^{1/s})\\n_1j+n_0+h\in\mathscr{D}_{1}(y^{1/s})}} \!\!\!\!\!\!\!\!\!\!\!\! 1
\quad\ll
\sum_{\substack{m\leq n_1y^{1-2/s}+n_0\\m\in\mathscr{D}_{1}}} \;\;
\sum_{\substack{y^{1/s}<r\leq\frac{n_1y^{1-1/s}+n_0}{m}\\r\;prime,\, r\in\mathscr{D}_{-1}}} \;\;
\sum_{\substack{y^{1/s}<p\leq\frac{n_1y+n_0}{mr}\\4^\sigma mrp+h\in\mathscr{D}_{1}(y^{1/s})\\p\;prime}} 1.
\]
By applying Lemma \ref{S3:lemma3} to the last sum with $x=n_1y+n_0$, $a=4^\sigma mr$ and $z=y^{1/s}$
(so that for $2<s<5/2$ and $y$ sufficiently large we have $\sqrt{x/a}\leq z\leq x/a$),
we can bound the above by
\[
\ll
\frac{y}{(\log y)^{3/2}}
\sum_{\substack{m<n_1y^{1-2/s}+n_0\\m\in\mathscr{D}_1}}\;\;
\sum_{\substack{y^{1/s}<r<\frac{n_1y^{1-1/s}+n_0}{m}\\r\;prime,\,r\in\mathscr{D}_{-1}}} \frac{1}{mr}.
\]
The sum over $r$ can be easily bounded by using the prime number theorem and gives
\[
\ll
\frac{y}{(\log y)^{3/2}}(\log(s-1)+o(1))
\sum_{\substack{m<n_1y^{1-2/s}+n_0\\m\in\mathscr{D}_1}} \frac{1}{m}.
\]
Finally, since
\[
\sum_{\substack{m\leq x\\m\in\mathscr{D}_{1}}} \frac{1}{m}
\leq
\prod_{\substack{p\leq x\\p\in\mathscr{D}_{1}}} \left(1-\frac{1}{p}\right)^{-1} \ll (\log x)^{1/2},
\]
we arrive at the estimate
\begin{equation}\label{2503:eq004}
\Scal_1 + \Scal_3 \ll \frac{y}{(\log y)}(\sqrt{s-2}\log(s-1)+o(1)).
\end{equation}
The sum $\Scal_2$ is bounded similarly after swapping the roles of $n_1j+n_0$ and
$n_1j+n_0+h$, since we have the estimate
\[
\Scal_2 \ll \sum_{\substack{n_1j+n_0+h\in\mathscr{D}_{12}(y^{1/s})\\n_1j+n_0\in 4\mathscr{D}_{1}(y^{1/s})}} 1,
\]
so that when $2\in\mathscr{D}_{-1}$ the last part of Lemma \ref{S3:lemma3}, with the negative shift $-h$, is to be applied.
Going back to \eqref{2503:eq003}, we can apply Lemma \ref{S3:lemma1} to obtain the lower bound
\begin{equation}\label{2503:eq005}
A(M,y^{1/s}) \gg \frac{y}{\log y}(\log(s-1)+o(1)).
\end{equation}
Combining \eqref{2503:eq003} and \eqref{2503:eq004}--\eqref{2503:eq005}, we arrive at the inequality
\[
B^*(y,h) \geq \frac{y}{\log y}(c_1 - c_2\sqrt{s-2} +o(1)) \log (s-1),
\]
where $c_1,c_2>0$ depend on $K$ and $h$.
Picking $s$ sufficiently close to $2$ gives a positive constant on the right
and proves the proposition.
\end{proof}

\section{Proof of Theorem \ref{intro:thm1}}\label{S4}

In this section we prove Theorem \ref{intro:thm1}.
In \S\ref{S4.1} we collect a few preliminary results
and introduce a function $v_k$ defined as an exponential sum
over elements in $\Ocal_K$, proving it is multiplicative.
Then in \S\ref{S4.2} we give the proof of Theorem \ref{intro:thm1},
using the results of \S\ref{S4.1} as well as those
of Sections~\ref{S2} and \ref{S3}.

\subsection{Preliminary results}\label{S4.1}

Let $M\geq 1$ and let $r_K(M)$ denote the number of algebraic integers in $\Ocal_K$
with norm $M$. By \cite[(11.9)]{iwaniec_topics_1997} we have
\begin{equation}\label{2604:eq001}
\frac{r_K(M)}{|\Ocal_K^\times|} = \sum_{d|M} \chi_q(d),
\end{equation}
where $|\Ocal_K^\times|$ is the number of units in $\Ocal_K$.
In particular, the above is a multiplicative function, so if we define
\begin{equation}\label{def:omegaK}
\omega_K(M) = \sum_{\substack{p|M\\\chi_q(p)=1}} 1
\quad\text{and}\quad
\Omega_K(M) = \sum_{\substack{p^a||M\\\chi_q(p)=1}} a,
\end{equation}
then we have
\begin{equation}\label{2104:eq005}
2^{\omega_K(M)} \leq \frac{r_K(M)}{|\Ocal_K^\times|} \leq 2^{\Omega_K(M)}.
\end{equation}

Assume now that $M$ is a norm. When $q\neq 8$, this implies
that $M$ is a quadratic residue modulo $q$ and so we can write $M\equiv m^2\pmod{q}$
for some integer $m$.
When $q=8$, we set $m=1$ or $m=0,2$ according to whether $M$ is odd
or $M\equiv 0,2\pmod{8}$, $M\equiv 4,6\pmod{8}$ respectively.
With this notation, we define the function
\begin{equation}\label{def:rKstar}
r_K^\star(M) := \sum_{\substack{y+ix\in\Ocal_K\\N(y+ix)=M\\2y\equiv 2m\!\!\!\!\pmod{q}}} 1.
\end{equation}

\begin{lemma}\label{lemma:S4.1}
Let $M\geq 1$ be a norm in $\Ocal_K$.
The function $r_K^\star$ given in \eqref{def:rKstar}
is well defined and we have
\[
r_K^\star(M) =
\begin{cases}
r_K(M) & \text{if } (M,q)>1,\\
\frac{1}{2}r_K(M) & \rule{0pt}{12pt}\text{if } (M,q)=1.
\end{cases}
\]
\end{lemma}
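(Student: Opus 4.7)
The plan is to analyse the residue of $2y$ modulo $q$ for $\alpha = y + ix \in \Ocal_K$ of norm $M$ and deduce the formula by pairing elements via the action of $\Ocal_K^\times$.

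First I would verify the key congruence: for every $\alpha = a + b z_q \in \Ocal_K$ with $N(\alpha) = M$, one has $y = a + b\mu$ and $x = b\lambda$, so $2y = 2a + 2b\mu \in \ZZ$ (since $2\mu \in \{0,1\}$) and, using $4\lambda^2 = q$, the identity $y^2 + x^2 = M$ rearranges as $(2y)^2 + q b^2 = 4M$. Hence $(2y)^2 \equiv 4M \equiv (2m)^2 \pmod q$, so $2y \pmod q$ is always a square root of $4M$. This ensures that, whenever $M$ is a norm, the condition in \eqref{def:rKstar} is non-vacuous. Well-definedness then follows: any other choice $m'$ with $(m')^2 \equiv M \pmod q$ satisfies $2m' \equiv \pm 2m \pmod q$, and the involution $\alpha \mapsto -\alpha$ (which preserves $N(\alpha)$ and sends $2y$ to $-2y$) bijects the two candidate subsets, so the count is independent of the choice.

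For the formula I would split on $\gcd(M,q)$. If $(M,q) = 1$, then $q \nmid 4m$, so $2m \not\equiv -2m \pmod q$, and for each admissible $q$ one checks that the square roots of $4M \pmod q$ reduce to $\{\pm 2m\}$. The map $\alpha \mapsto -\alpha$ is a fixed-point-free involution sending $\{\alpha : 2y \equiv 2m\}$ bijectively onto $\{\alpha : 2y \equiv -2m\}$, and these two classes partition $\{\alpha : N(\alpha) = M\}$. Hence each has size $\tfrac{1}{2} r_K(M)$, giving $r_K^\star(M) = \tfrac{1}{2} r_K(M)$. If instead $(M, q) > 1$, then $4M \equiv 0 \pmod q$ and the prescribed $m$ satisfies $2m \equiv 0 \pmod q$; for $q$ an odd prime, the squarefreeness of $q$ forces $q \mid 2y$, while for $q = 8$ a brief case analysis on $M \pmod 8$ (paralleling the one in the proof of Proposition \ref{intro:prop2}) shows the same phenomenon. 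In both situations the congruence $2y \equiv 2m \pmod q$ is automatic, so $r_K^\star(M) = r_K(M)$.

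The case $q = 4$ requires a small modification because negation fails to separate residues modulo $4$ (since $-2 \equiv 2 \pmod 4$); here one must instead invoke the unit $i \in \Ocal_K^\times$, whose action sends $y + ix$ to $-x + iy$ and swaps the classes $\{y \text{ odd}\}$ and $\{y \text{ even}\}$ when $M$ is odd. As noted in Proposition \ref{prop:S2.6}, this case is already treated in \cite{chatzakos_distribution_2020}, so I would simply cite it. The main obstacle is thus verifying cleanly that, for each $q$, the residues $\{\pm 2m\}$ exhaust the possible values of $2y \pmod q$ and that an appropriate unit swaps them; this is immediate for the odd primes and requires only a short case-by-case check for the composite moduli $q = 4$ and $q = 8$.
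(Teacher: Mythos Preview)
Your proposal is correct and follows essentially the same route as the paper: both arguments establish $(2y)^2\equiv 4M\pmod q$ from the norm equation, deduce that $2y$ lies in one of two residue classes $\pm 2m$, and then use a unit to swap the classes when $(M,q)=1$, with $q=4$ deferred to \cite{chatzakos_distribution_2020} and $q=8$ handled by a short case analysis. The only cosmetic difference is that the paper phrases the splitting via ``half of the units in $\Ocal_K^\times$'' whereas you use the single involution $\alpha\mapsto -\alpha$; for $q\neq 3,4$ these are literally identical, and for $q=3$ your version is a slightly more direct way to reach the same conclusion.
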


\begin{proof}
Assume first that $q$ is odd and write $M\equiv m^2\pmod{q}$.
If $y+ix\in\Ocal_K$, then $2x$ is an integer multiple of $\sqrt{q}$,
so if $N(y+ix)=M$ we deduce
\[
4m^2 \equiv 4M = 4y^2+4x^2\equiv 4y^2\pmod{q}.
\]
Therefore, $y\equiv \pm m\pmod{q}$.
If $q|M$, then $m\equiv 0\pmod{q}$ and $y\equiv m\pmod{q}$ is satisfied
by all points $y+ix\in\Ocal_K$ of norm $M$, so $r_K^\star(M)=r_K(M)$ in this case.
If $q\nmid M$, then $m\not\equiv 0\pmod{q}$
and for exactly half of the units $u\in\Ocal_K^\times$ the point $u(y+ix)$
will contribute to the sum defining $r_K^\star(M)$, so $r_K^\star(M)=\frac{1}{2}r_K(M)$.
Replacing $m$ by $-m$ amounts to taking the other units,
so the value of $r_K^\star(M)$ is unchanged and the function is well defined.

For the case $q=4$ we refer to \cite[(3.2)]{chatzakos_distribution_2020}, so assume $q=8$.
If $M$ is even then by reducing modulo $8$ the identity $y^2+2x^2=M$ we deduce that $y\equiv m\pmod{4}$
always, so $r_K^\star(M)=r_K(M)$ in this case.
If $M$ is odd, then the condition on the norm only gives $y\equiv \pm 1\pmod{4}$ and therefore again half of the
elements in $\Ocal_K$ with norm $M$ satisfy the congruence in \eqref{def:rKstar},
so $r_K^\star(M)=\frac{1}{2}r_K(M)$.
\end{proof}

Next, for $X\geq 1$ we define an index set
\begin{equation}\label{def:NqX}
\IndexSet_{q}(X) :=
\begin{cases}
\{n\in\NN, n\leq X\} & \text{if $q$ is even},\\
\{n=\tfrac{2k+1}{2} \leq X,\; k\in \NN\} & \text{if $q$ is odd}.
\end{cases}
\end{equation}
In this notation, the set $\Ncal_{z_q}$ of non-empty arithmetic radii defined in \eqref{intro:def:Nzq}
is a subset of $\IndexSet_{q}(\infty)$ by Lemma \ref{lemma:S2.4} and Proposition \ref{prop:S2.6};
up to hight $X$, we have
\begin{equation}\label{1406:eq001}
\Ncal_{z_q}(X) = \{n\in\Ncal_{z_q}:n\leq X\} = \{n\in\IndexSet_{q}(X):\;b_K(n^2-4\lambda^4)=1\}.
\end{equation}

In Lemma \ref{lemma:S4.2} below we show that, for the generic $n\in\IndexSet_{q}(X)$
such that $b_K(n^2-4\lambda^4)=1$, both
$\omega_K(n^2-4\lambda^4)$ and $\Omega_K(n^2-4\lambda^4)$ are close to $\log\log X$.
The lemma is proved with the aid of a result by Nair and Tenenbaum on 
shifted convolution sums of multiplicative functions \cite[(7)]{nair_short_1998}
(see also \cite[Theorem 15.6]{friedlander_opera_2010}),
which states that for any non-negative multiplicative functions $f$ and $g$
such that $f(n),g(n)\leq \tau_l(n)$ for some divisor function $\tau_l$,
and for any integer $h$, we have
\begin{equation}\label{NT}
\sum_{n\leq X} f(n)g(n+h) \ll_h X \prod_{p\leq X} \left(1+\frac{f(p)-1}{p}\right)\left(1+\frac{g(p)-1}{p}\right).
\end{equation}

\begin{lemma}\label{lemma:S4.2}
Fix $\epsilon\in(0,1/2)$ and let $\omega_K,\Omega_K$ be as in \eqref{def:omegaK}.
Then, as $X\to\infty$, we have
\begin{equation}\label{2104:eq004}
\sum_{\substack{n\in\IndexSet_{q}(X)\\\omega_K(n^2-4\lambda^4)\leq (1-\epsilon)\log\log X}} \!\!\!\!\!\!\!\!\!\!\!\!\!\!\!\!\!\! b_K(n^2-4\lambda^4)
\quad+\!\!\!\!\!\!\!\!\!\!\!\!
\sum_{\substack{n\in\IndexSet_{q}(X)\\\Omega_K(n^2-4\lambda^4)\geq (1+\epsilon)\log\log X}} \!\!\!\!\!\!\!\!\!\!\!\!\!\!\!\!\!\! b_K(n^2-4\lambda^4)
\ll_{K,\epsilon} \frac{X}{X^{1+\frac{1}{3}\epsilon^2}}.
\end{equation}
Therefore,
\begin{equation}\label{2104:eq003}
r_K^\star(n^2-4\lambda^4) \asymp (\log n)^{\log 2 +o(1)}
\end{equation}
for $n\in\IndexSet_{q}(X)$ outside of an exceptional set of size at most $X(\log X)^{-1-\frac{1}{3}\epsilon^2}$.
\end{lemma}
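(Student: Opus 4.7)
My plan is to prove \eqref{2104:eq004} by combining a Chernoff-type inequality with the Nair--Tenenbaum bound \eqref{NT}, and then deduce \eqref{2104:eq003} from the two-sided squeeze given by \eqref{2104:eq005} and Lemma \ref{lemma:S4.1}.

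First I would record the multiplicative structure of $n^2-4\lambda^4$. Writing $n^2-4\lambda^4=(n-2\lambda^2)(n+2\lambda^2)$, the greatest common divisor of the two factors divides $4\lambda^2=q$. Since $\omega_K$ and $\Omega_K$ only count primes $p$ with $\chi_q(p)=1$ (in particular, they ignore primes dividing $q$), the common factors contribute nothing, and one has the additive splittings
\[
\omega_K(n^2-4\lambda^4)=\omega_K(n-2\lambda^2)+\omega_K(n+2\lambda^2),
\quad
\Omega_K(n^2-4\lambda^4)=\Omega_K(n-2\lambda^2)+\Omega_K(n+2\lambda^2),
\]
and likewise $b_K(n^2-4\lambda^4)=b_K(n-2\lambda^2)b_K(n+2\lambda^2)$, because any prime with $\chi_q(p)=-1$ hits at most one of the two factors. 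This turns both tails into genuine shifted convolutions to which \eqref{NT} can be applied.

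Next, for any $\alpha>0$ of size comparable to $1$, consider the multiplicative function $f_\alpha(m)=b_K(m)\alpha^{\omega_K(m)}$ (or the analogue with $\Omega_K$). At primes $p\leq X$ one has $f_\alpha(p)=\alpha$ when $\chi_q(p)=1$, $f_\alpha(p)=0$ when $\chi_q(p)=-1$, and $f_\alpha(p)=1$ when $p\mid q$. By Landau's theorem, the split and inert primes each have Dirichlet density $1/2$, so applying \eqref{NT} with shift $h=4\lambda^2$ to $f_\alpha(n-2\lambda^2)f_\alpha(n+2\lambda^2)$ yields
\[
\sum_{n\in\IndexSet_q(X)} b_K(n^2-4\lambda^4)\,\alpha^{\omega_K(n^2-4\lambda^4)}
\ll_{K} X\!\prod_{p\leq X}\!\Bigl(1+\frac{f_\alpha(p)-1}{p}\Bigr)^{\!2}\!\ll_K X(\log X)^{\alpha-2}.
\]
A Markov-type inequality then gives, for any $\alpha\in(0,1)$,
\[
\sum_{\substack{n\in\IndexSet_q(X)\\\omega_K(n^2-4\lambda^4)\leq(1-\epsilon)\log\log X}}\!\!\!\!\!\! b_K(n^2-4\lambda^4)
\;\leq\; \alpha^{-(1-\epsilon)\log\log X}\cdot X(\log X)^{\alpha-2}
= X(\log X)^{\alpha-2-(1-\epsilon)\log\alpha}.
\]
Choosing $\alpha=1-\epsilon$ and Taylor-expanding, the exponent becomes $-1-\epsilon^2/2+O(\epsilon^3)$, which is at most $-1-\epsilon^2/3$ for $\epsilon$ small (and for larger $\epsilon$ one trivially obtains a better bound). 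The symmetric argument with $\alpha=1+\epsilon$ handles the $\Omega_K$-tail; the main subtlety there is to check that $\alpha^{\Omega_K(p^k)}$ remains bounded by a fixed divisor function $\tau_l$, which is clear since $\alpha$ is bounded.

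Finally, outside the exceptional set identified by \eqref{2104:eq004}, both $\omega_K(n^2-4\lambda^4)$ and $\Omega_K(n^2-4\lambda^4)$ lie in the window $[(1-\epsilon)\log\log X,(1+\epsilon)\log\log X]$. Plugging this into the squeeze \eqref{2104:eq005} gives
\[
(\log X)^{(1-\epsilon)\log 2}\ll r_K(n^2-4\lambda^4)/|\Ocal_K^\times|\ll (\log X)^{(1+\epsilon)\log 2},
\]
and Lemma \ref{lemma:S4.1} says $r_K^\star=\frac{1}{2}r_K$ or $r_K$, so the same two-sided bound holds for $r_K^\star$. Letting $\epsilon\to 0$ sufficiently slowly yields \eqref{2104:eq003}.

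The main obstacle I anticipate is bookkeeping the Euler-product asymptotics: one must use Landau's theorem to control $\sum_{p\leq X,\chi_q(p)=\pm 1}1/p\sim\tfrac{1}{2}\log\log X$ with enough precision that the products $\prod_{p\leq X,\chi_q(p)=1}(1+(\alpha-1)/p)$ and $\prod_{p\leq X,\chi_q(p)=-1}(1-1/p)$ combine cleanly to $(\log X)^{(\alpha-2)/2}$, and one must check that the form of \eqref{NT} cited from \cite{nair_short_1998} applies to $b_K\alpha^{\omega_K}$ and $b_K\alpha^{\Omega_K}$ (the growth hypothesis $f\leq\tau_l$ is the only point to verify, and holds uniformly in $\alpha$ on any compact set bounded away from infinity).
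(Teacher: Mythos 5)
Your argument is correct and is essentially the paper's own proof: the same Rankin/Chernoff weighting (your $\alpha^{\omega_K}$ with $\alpha=1\mp\epsilon$ is the paper's $e^{\mp\alpha\omega_K}$ with $\alpha=\epsilon$ after the substitution $\alpha\mapsto e^{-\alpha}$), the same factorisation of $n^2-4\lambda^4$ into shifted factors whose gcd divides $q$, the same application of \eqref{NT} with Mertens/Landau for the Euler products, and the same squeeze via \eqref{2104:eq005} and Lemma \ref{lemma:S4.1}. The only caveat --- shared with the paper, which uses $e^{\epsilon\Omega_K}$ --- is that $\alpha^{\Omega_K}$ with $\alpha>1$ is \emph{not} literally dominated by any fixed $\tau_l$ (it grows exponentially in $k$ on $p^k$), so for the upper tail one must appeal to the actual hypotheses of Nair--Tenenbaum rather than the simplified $f\leq\tau_l$ condition stated before \eqref{NT}.
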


\begin{proof}
Let $\alpha\in(0,1)$. By Chernoff's bound, we have
\[
\sum_{\substack{n\in\IndexSet_{q}(X)\\\omega_K(n^2-4\lambda^4)\leq (1-\epsilon)\log\log X}} \!\!\!\!\!\!\!\!\!\!\!\!\!\!\!\! b_K(n^2-4\lambda^4)
\leq
(\log X)^{\alpha(1-\epsilon)} \sum_{n\in\IndexSet_{q}(X)} b_K(n^2-4\lambda^4)e^{-\alpha\omega_K(n^2-4\lambda^4)}.
\]
Note that the function $b_K(\cdot)e^{-\alpha\omega_K(\cdot)}$ is multiplicative.
Since we can write $n^2-4\lambda^4=(n+2\lambda^2)(n-2\lambda^2)$ and the two factors
share at most a divisor of $q$, the above can be bounded by
\[
\leq
e^\alpha(\log X)^{\alpha(1-\epsilon)} \sum_{n\in\IndexSet_{q}(X)} b_K(n+2\lambda^2)e^{-\alpha\omega_K(n+2\lambda^2)}b_K(n-2\lambda^2)e^{-\alpha\omega_K(n-2\lambda^2)}.
\]
We then apply \eqref{NT} and Mertens' theorem to bound
\[
\begin{split}
&\ll_{K}
X(\log X)^{\alpha(1-\epsilon)} \prod_{p\leq X} \left(1+\frac{b_K(p)e^{-\alpha}-1}{p}\right)^2
\\
&\ll_K
\frac{X}{\log X} (\log X)^{\alpha(1-\epsilon)-1+e^{-\alpha}}.
\end{split}
\]
Picking $\alpha=\epsilon$ and using that $e^{-\epsilon}\leq 1-\epsilon+\epsilon^2/2$ we obtain the result for $\omega_K$.
The argument for $\Omega_K$ is similar, except that in the end
the exponent gives $-\epsilon(1+\epsilon)-1+e^\epsilon\leq -\epsilon^2/3$.
Finally, \eqref{2104:eq003} follows from \eqref{2104:eq004},
Lemma \ref{lemma:S4.1} and the bounds in \eqref{2104:eq005}.
\end{proof}

We conclude this part by introducing a new
function closely related to $r_K^\star$.
Let $M\geq 1$ be a norm in $\Ocal_K$ and let $m$ denote the same integer used
to define $r_K^\star(M)$, see before \eqref{def:rKstar}.
For any $k\in\ZZ$ we define
\begin{equation}\label{def:vk}
v_k(M) := \frac{1}{r_K^\star(M)} \Biggl| \sum_{\substack{y+ix\in\Ocal_K\\N(y+ix)=M\\2y\equiv 2m\!\!\!\!\pmod{q}}} e^{ik\theta(y+ix)}\Biggr|.
\end{equation}
When $r_K^\star(M)=0$, we define $v_k(M)=0$.
Note that selecting $-m$ instead of $m$ leads to a new sum which corresponds
to the original one multiplied by an element of absolute value one.
Therefore $v_k$ is well defined.
Note also that $v_k$ is real since $y+ix$ appears in the sum
if and only if $y-ix$ does. Hence, we have $v_{k}(M)=v_{-k}(M)$.

\begin{lemma}\label{lemma:S4.3}
Let $v_k$ be the function defined in \eqref{def:vk}.\\
$(i)$ $v_k$ is multiplicative.\\
$(ii)$ if $q=3$, then $v_k=0$ when $3\nmid k$.\\
$(iii)$ if $q=4$, then $v_k=0$ when $k$ is odd.\\
$(iv)$ if $q$ is odd (resp.~even), then $v_k(q^{a})=v_k(q^b)$
(resp.~$v_k(2^{a})=v_k(2^b)$), for all $a,b\geq 0$.
\end{lemma}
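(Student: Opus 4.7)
The approach is to exploit the PID structure of $\Ocal_K$ together with the action of $\Ocal_K^\times$ on elements of given norm, isolating in each case the subgroup of units preserving the congruence $2y \equiv 2m \pmod q$. Concretely, I introduce
\[
S(M) := \{\alpha\in\Ocal_K : N(\alpha)=M,\ 2\Re(\alpha)\equiv 2m\pmod q\}
\]
and let $U(M) \subseteq \Ocal_K^\times$ be the subgroup stabilising $S(M)$. From Lemma~\ref{lemma:S4.1} one reads that $|U(M)|=|\Ocal_K^\times|$ when $\gcd(M,q)>1$ and $|U(M)|=|\Ocal_K^\times|/2$ otherwise.

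For part $(i)$, $S(M)$ is a union of $U(M)$-orbits, one per ideal of norm $M$, so
\[
\sum_{\alpha\in S(M)}\alpha^k = \Bigl(\sum_{u\in U(M)}u^k\Bigr)\sum_i\alpha_i^k,
\]
with $\alpha_i$ orbit representatives. For coprime $M,N$, the PID property bijects ideals of norm $MN$ with pairs of ideals of norms $M$ and $N$, so representatives can be chosen multiplicatively as $\delta_{ij}=\beta_i\gamma_j$, giving $\sum_l\delta_l^k=(\sum_i\beta_i^k)(\sum_j\gamma_j^k)$. A short case check on whether $M$, $N$, $MN$ are coprime to $q$ then shows that the ratio $|\sum_{U(MN)}u^k|/(|\sum_{U(M)}u^k|\,|\sum_{U(N)}v^k|)$ equals $2/|\Ocal_K^\times|$ in every case (or both sides vanish together), which matches the ratio $r_K^\star(MN)/(r_K^\star(M)r_K^\star(N))$ from Lemma~\ref{lemma:S4.1}. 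This yields $v_k(MN)=v_k(M)v_k(N)$.

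For parts $(ii)$ and $(iii)$, I would determine $U$ explicitly when $\gcd(M,q)=1$. For $q=3$, a direct check shows multiplication by $\zeta_3$ preserves $2y\pmod 3$ while multiplication by $-1$ flips its sign, so $U=\{1,\zeta_3,\zeta_3^2\}$; the extracted factor $\sum_{u\in U}u^k = 1+\zeta_3^k+\zeta_3^{2k}$ vanishes unless $3\mid k$. For $q=4$, multiplication by $i$ exchanges the admissible classes $\pm m\pmod 4$, so $U=\{\pm 1\}$ and the factor $1+(-1)^k$ vanishes when $k$ is odd. The case $\gcd(M,q)>1$ is handled by the same argument with $U=\Ocal_K^\times$, whose $k$-th power sum vanishes unless $|\Ocal_K^\times|\mid k$, which is consistent with $(ii)$–$(iii)$.

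For part $(iv)$, I would use that $q$ (for $q$ odd) and $2$ (for $q$ even) are the unique ramified primes of $K$, with $(q)=(\pi)^2$ for $\pi=i\sqrt q$, $\pi=1+i$, or $\pi=\sqrt{-2}$ in the three cases. Every $\alpha$ with $N(\alpha)=q^a$ is then of the form $u\pi^a$ for a unit $u$, and for $a\geq 1$ the congruence $2y\equiv 0\pmod q$ is automatic, so
\[
v_k(q^a) = \frac{|\pi|^{ak}\,|\sum_{u\in\Ocal_K^\times}u^k|}{q^{ak/2}\,|\Ocal_K^\times|} = \frac{|\sum_u u^k|}{|\Ocal_K^\times|},
\]
independent of $a$. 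The hard part will be part $(i)$, where the matching of the subgroups $U(M)$, $U(N)$, $U(MN)$ requires a case analysis across the possible values of $\gcd(M,q)$ and $\gcd(N,q)$; once this is done, parts $(ii)$–$(iv)$ follow transparently from the ramification structure and the unit-orbit decomposition.
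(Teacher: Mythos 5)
Your framework (decomposing the solution set into orbits of the stabilizer $U(M)\leq\Ocal_K^\times$ of the congruence class and extracting the unit power sum as a common factor) is essentially the same mechanism as the paper's proof, just organized through orbit representatives rather than by multiplying the two exponential sums directly; parts $(ii)$, $(iii)$ and $(iv)$ then come out exactly as in the paper. However, part $(i)$ has a genuine gap at its central step. You assert that ``representatives can be chosen multiplicatively as $\delta_{ij}=\beta_i\gamma_j$'', but for $\beta_i\gamma_j$ to represent an orbit of $S(MN)$ you must check that the product of an element satisfying $2y\equiv 2m_1\pmod{q}$ with one satisfying $2y\equiv 2m_2\pmod{q}$ satisfies $2y\equiv 2m\pmod{q}$ for the residue $m$ attached to $MN$. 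This is precisely the arithmetic content of the paper's proof of $(i)$: for $q$ odd one computes $4y=4y_1y_2-4x_1x_2\equiv 4m_1m_2\pmod{q}$ and uses that $(m_1m_2)^2\equiv MN$, so $m_1m_2$ is an admissible choice of $m$ (the well-definedness of $v_k$ under $m\mapsto -m$ is needed here); for $q=8$ the check actually fails on the nose when $M\equiv N\equiv 3\pmod 8$ --- the products land in the class $y\equiv-1\pmod 4$ rather than $y\equiv 1$ --- and one must multiply every representative by the fixed unit $-1$ and invoke the outer absolute value to remove the resulting factor $(-1)^k$. Your ``short case check on whether $M$, $N$, $MN$ are coprime to $q$'' addresses only the ratio of unit power sums (which is indeed also needed, and which you compute correctly), not this compatibility of congruence classes under multiplication; without it the identity $\sum_l\delta_l^k=(\sum_i\beta_i^k)(\sum_j\gamma_j^k)$ says nothing yet about $\sum_{\alpha\in S(MN)}\alpha^k$.

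Two smaller points. First, the decomposition of $S(M)$ into one $U(M)$-coset per ideal requires knowing that each unit either preserves the class $2y\equiv 2m$ uniformly or sends it uniformly to $2y\equiv -2m$; this is implicit in the proof of Lemma \ref{lemma:S4.1} but is doing real work, and your description of $q=4$ is slightly off: multiplication by $i$ sends $y\mapsto -x$, which for odd $M$ is even and so violates the congruence outright rather than ``exchanging $\pm m$'' (the conclusion $U=\{\pm1\}$ is nevertheless correct). Second, your argument for $(iv)$, like the paper's, only covers $a\geq 1$: for $a=0$ one gets $v_k(1)=|\sum_{u\in U(1)}u^k|/|U(1)|$ with $|U(1)|=|\Ocal_K^\times|/2$, which for $q=7$ and $k$ odd equals $1$ while $v_k(7)=0$; so the statement must in any case be read with $a,b\geq 1$, and your restriction there is not a defect relative to the paper.
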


\begin{proof}
$(i)$ Assume $q\neq 3,4$.

Let $M_1,M_2\geq 1$ with $(M_1,M_2)=1$ and $r_K^\star(M_1),r_K^\star(M_2)\neq 0$.
For $j=1,2$, write $M_j\equiv m_j^2\pmod{q}$. Then
\begin{equation}\label{2804:eq001}
v_k(M_1)v_k(M_2) = \frac{1}{r_K^\star(M_1)r_K^\star(M_2)}
\Biggl|\sum_{\substack{N(y_1+ix_1)=M_1\\2y\equiv 2m_1\;(q)}} \sum_{\substack{N(y_2+ix_2)=M_2\\2y\equiv 2m_2\;(q)}} \!\!e^{ik\theta_1}e^{ik\theta_2}\Biggr|,
\end{equation}
where for brevity we wrote $\theta_j$ in place of $\theta(y_j+ix_j)$, $j=1,2$.
Consider the product $y+ix:=(y_1+ix_1)(y_2+ix_2)$. Since the norm is multiplicative,
$N(y+ix)=M_1M_2$. Moreover, $\theta(y+ix)\equiv \theta_1+\theta_2$ modulo $2\pi$.
Regarding the congruence modulo $q$, we distinguish two cases depending on whether
$q$ is odd or $q=8$.

When $q$ is odd, recalling that $2x_1$ and $2x_2$ are integer multiples of $\sqrt{q}$, we can write
\[
4y=4y_1y_2-4x_1x_2 \equiv 4y_1y_2 \equiv 4m_1m_2\pmod{q}.
\]
Note that $2$ is invertible modulo $q$,
so this is equivalent to $2y\equiv 2m_1m_2\pmod{q}$.
In other words, $y+ix$ satisfies the congruence in the definition of $v_k(M_1M_2)$.
Finally, since $r_K^\star$ is multiplicative when $q\neq 3,4$
(by Lemma \ref{lemma:S4.1} and \eqref{2604:eq001}),
we deduce that \eqref{2804:eq001} equals~$v_k(M_1M_2)$.

When $q=8$, the argument is the same except when $M_1\equiv M_2\equiv 3\pmod{8}$.
In this case, the above calculations lead to $y=y_1y_2-x_1x_2\equiv -1\pmod{4}$,
whereas in the definition of $v_k$ we had $y\equiv 1\pmod{4}$.
However, we saw that changing the residue class to its negative amounts
to multiplying the sum by an element of modulus 1, so
after taking absolute value we obtain again $v_k(M_1M_2)$.

$(ii)$ The sum defining $v_k$ contains the elements $y+ix$,
$(y+ix)e^{2\pi i/3}$ and $(y+ix)e^{-2\pi i/3}$. Therefore,
when $3\nmid k$, the contribution of any such three points vanishes, giving $v_k=0$.
Assume $3|k$ and $3\nmid M$ (if $3|M$ the congruence in the definition of $v_k$
is always satisfied and the argument simplifies).
Then $v_k$ can be written as a sum of \emph{primary} elements
(see e.g.~\cite[Ch.9 \S3]{ireland_classical_1990}, although they choose the residue class $2$ modulo $3$), namely we have
\[
v_k(M) = \frac{3}{r_K^\star(M)} \Biggl|\sum_{\substack{N(y+ix)=M\\y+ix\equiv 1\;(3)}} e^{ik\theta(y+ix)}\Biggr|,
\]
which can be showed to be multiplicative by a similar argument as the one
used to prove~$(i)$.

$(iii)$ When $q=4$ the sum defining $v_k$ contains both $y+ix$ and $-y-ix$,
so if $k$ is odd the corresponding exponentials cancel out and $v_k=0$.
When $k$ is even, the sum is expressed in terms of primary elements in the 
Gaussian integers, i.e.~of the form $y+ix\equiv 1\pmod{2(1+i)}$,
and is again multiplicative. We refer to
\cite[\S3.1]{chatzakos_distribution_2020} for more details.

$(iv)$ Assume $q\neq 3,4,8$. Since $q$ ramifies in $\Ocal_K$, the elements
of norm $q$ are $\pm\alpha$, where $\alpha=i\sqrt{q}\in\Ocal_K$.
Similarly, the elements of norm $q^a$ are $\pm\alpha^a$.
In particular, $r_K^\star(q^a)=2$ and
\[
2v_k(q^a) = |i^{ka}+(-1)^{k}i^{ka}| = |1+(-1)^k|,
\]
which is independent of $a$. Therefore $v_k(q^a)$ attains the same
value for all $a\geq 0$.
When $q=8$, the argument is the same except that one works with $\alpha=i\sqrt{2}$.
For the case $q=4$ we refer to \cite[\S3.2]{chatzakos_distribution_2020}.
Finally, when $q=3$ we have six units of the form $\omega^j$,
where $\omega=\frac{1+i\sqrt{3}}{2}$ and $j=0,\dots,5$.
In particular, $r_K^\star(3^a)=6$.
Recall that we can assume $3|k$, say $k=3h$, for otherwise $v_k$
is identically zero. If we take $\alpha=\frac{3+i\sqrt{3}}{2}$
(which has norm $3$ and angle $\frac{\pi}{6}$), we deduce
\[
6v_k(3^a) = \Bigl|\sum_{j=0}^{5} e^{3hi(\frac{\pi a}{6}+\frac{\pi j}{3})}\Bigr|
=\Bigl|\sum_{j=0}^{5} (-1)^{hj}\Bigr|,
\]
which again is independent of $a$.
\end{proof}

If we evaluate $v_k$ on primes, we obtain simply a cosine. More precisely, 
$v_k(p)=|\cos(k\theta_p)|$, where $\theta_p$ is the angle of
a prime element in $\Ocal_K$ above $p$ (a primary element if $q=3,4$).
Such angles equidistribute on the unit circle (as a consequence
of the prime number theorem over number fields,
see \cite[Theorem 5.36]{iwaniec_analytic_2004} and \cite{kowalski_errata}).
Following an argument of Erd\H{o}s and Hall \cite[(24)--(25)]{erdos_angular_1999},
one can derive the following asymptotic for the absolute value of the cosines.
Let $k\neq 0$ be given. Then, uniformly for $|k|\ll \log x$, we have
\begin{equation}\label{EH}
\sum_{\substack{p\leq x\\\chi_q(p)=1}} \frac{|\cos(k\theta_p)|}{p} = \frac{1}{\pi}\log\log x + O(1).
\end{equation}

\subsection{Proof of Theorem \ref{intro:thm1}}\label{S4.2}
In Corollary \ref{cor:S2.7} we saw that the discrepancy appearing in Theorem \ref{intro:thm1}
can be calculated by looking at angles of $x+iy$, where $(x,y)\in\Lcal_n$,
the set $\Lcal_n$ is defined in \eqref{def:Lncurly} and $n\in\Ncal_{z_q}$ as in \eqref{intro:def:Nzq}.
Furthermore, we note that the discrepancy is unchanged if we replace $x+iy$ by $y+ix$,
which is more convenient to have, since in Lemma~\ref{lemma:S2.5} we showed that $y+ix\in\Ocal_K$
and so we can use the results from \S\ref{S4.1}.
In other words, to prove Theorem \ref{intro:thm1} it suffices to estimate
\[
\EuScript{D}_n := \sup_{I\subseteq S^1}\left|
\frac{1}{|\Lcal_n|} \sum_{y+ix\in\Lcal_n} \mathbf{1}_{\{\theta(y+ix)\in I\}} - \frac{|I|}{2\pi}
\right|.
\]
Here and below, with a slight abuse of notation, we write $y+ix\in\Lcal_n$ rather than $(x,y)\in\Lcal_n$.

By Proposition \ref{prop:S2.6}, $|\Lcal_n|=r_K^\star(n^2-4\lambda^4)$, where $r_K^\star$
is as in \eqref{def:rKstar}. Even more precisely, the elements $y+ix\in\Lcal_n$ are exactly those appearing
in the definition of $r_K^\star$ and $v_k$ in \eqref{def:vk}, with $M=n^2-4\lambda^4$.
Therefore, applying the Erd\H{o}s--Tur\'an inequality \cite[Corollary 1.1]{montgomery_ten_1994},
we can bound
\begin{equation}\label{2804:eq011}
\EuScript{D}_n \ll \frac{1}{\log X} + \sum_{k=1}^{\log X} \frac{v_k(n^2-4\lambda^4)}{k}.
\end{equation}

Let $\epsilon\in(0,1/2)$.
We now restrict $n$ to a density one subset of $\Ncal_{z_q}$, namely to
\[
\EuScript{B}_K(X;\epsilon) := \{n\in\Ncal_{z_q}: n\leq X\text{ and }\omega_K(n^2-4\lambda^4)\geq (1-\epsilon)\log \log X\}.
\]
Using the notation $\IndexSet_{q}(X)$ from \eqref{def:NqX} and recalling \eqref{1406:eq001},
the set $\EuScript{B}_K(X;\epsilon)$ can be written as
\[
\EuScript{B}_K(X;\epsilon) = \{n\in\IndexSet_{q}(X):\; b_K(n^2-4\lambda^4)=1,\; \omega_K(n^2-4\lambda^4)\geq(1-\epsilon)\log\log X\}.
\]

Note that
\[
\#\{n\in\IndexSet_{q}(X): b_K(n^2-4\lambda^4)=1\}
\geq \sum_{n\in\IndexSet_{q}(X)} b_K(n+2\lambda^2)b_K(n-2\lambda^2)
\gg_K \frac{X}{\log X},
\]
the last inequality being true by Theorem \ref{intro:thm2}.
On the other hand, Lemma \ref{lemma:S4.2} gives
\[
\#\{n\in\IndexSet_{q}(X): b_K(n^2-4\lambda^4)=1 \text{ and }\omega_K(n^2-4\lambda^4)\leq (1-\epsilon)\log\log X\}
\ll
\frac{X}{(\log X)^{1+\frac{1}{3}\epsilon^2}},
\]
which shows that $\EuScript{B}_K(X;\epsilon)$ is a density one subset of $\{n\in\IndexSet_{q}(X):\;b_K(n^2-4\lambda^4)=1\}$.
Furthermore, we distinguish between elements in $\EuScript{B}_K(X;\epsilon)$ \enquote{coprime} to $q$
or not. Set
\[
\EuScript{B}_K^{\flat}(X;\epsilon) :=
\begin{cases}
\{n\in\EuScript{B}_K(X;\epsilon):\;(n,q)=1\} & \text{$q$ even}\\
\{n\in\EuScript{B}_K(X;\epsilon):\;(2n,q)=1\} & \text{$q$ odd},
\end{cases}
\quad
\EuScript{B}_K^{\sharp}(X;\epsilon):= \EuScript{B}_K(X;\epsilon)\setminus\EuScript{B}_K^{\flat}(X;\epsilon).
\]
Define also $\IndexSet_{q}^\flat(X)$ and $\IndexSet_{q}^\sharp(X)$ in a similar way.
We will carry out the proof first for $\EuScript{B}_K^{\flat}(X;\epsilon)$ and show
how to reduce back to this case when dealing with $\EuScript{B}_K^{\sharp}(X,\epsilon)$.

By Chebyshev's inequality, we can write
\begin{equation}\label{2804:eq010}
\begin{split}
\#\bigg\{ n\in \EuScript{B}_K^{\flat}(X;\epsilon): & \sum_{1\leq k \leq \log X}\frac{v_k(n^2-4\lambda^4)}{k}\geq (\log X)^{-\log\frac{\pi}{2}+\varepsilon} \bigg\} \\
&\leq (\log X)^{\log\frac{\pi}{2}-\varepsilon}
\sum_{1\leq k \leq \log X} \frac{1}{k}
\sum_{\substack{n\in \IndexSet_q^\flat(X)\\ \omega_K(n^2-4\lambda^4)\geq (1-\varepsilon)\log\log X}}
\!\!\!\!\!\!\!\!\!\!\!\!\!\!\!\!\!\! v_k(n^2-4\lambda^4).
\end{split}
\end{equation}
We focus on the inner sum over $n$, which we denote by $S_k$.
Let $\alpha\in(0,1)$. By Chernoff's bound, we can estimate
\[
S_k \leq (\log X)^{-\alpha(1-\epsilon)} \sum_{n\in\IndexSet_q^{\flat}(X)} v_k(n^2-4\lambda^4)e^{\alpha \omega_K(n^2-4\lambda^4)}.
\]
Moreover, $(n+2\lambda^2,n-2\lambda^2)=1$ since the gcd divides $q$ but $n\in\IndexSet_q^{\flat}(X)$.
By Lemma \eqref{lemma:S4.3}, the function $v_k$ is multiplicative. Therefore,
the right-hand side above equals
\[
(\log X)^{-\alpha(1-\epsilon)} \sum_{n\in\IndexSet_q^{\flat}(X)}
v_k(n+2\lambda^2)e^{\alpha \omega_K(n+2\lambda^2)} v_k(n-2\lambda^2)e^{\alpha \omega_K(n-2\lambda^2)}.
\]
Applying \eqref{NT}, \eqref{EH} and picking $\alpha=\log\pi/2$, we obtain
\[
\begin{split}
S_k \ll \frac{X}{(\log X)^{\alpha(1-\epsilon)}} \prod_{p\leq X} \left(1+\frac{e^\alpha v_k(p)b_K(p)-1}{p}\right)^2
\ll \frac{X}{(\log X)^{1+(1-\epsilon)\log\frac{\pi}{2}}},
\end{split}
\]
Inserting this bound for $S_k$ in \eqref{2804:eq010}
and going back to $\EuScript{D}_n$ and $r_K^\star(n^2-4\lambda^4)$
by means of \eqref{2804:eq011} and \eqref{2104:eq003}, we deduce that
\[
\EuScript{D}_n \ll (\log X)^{-\log \frac{\pi}{2}+\epsilon} \ll r_K^\star(n^2-4\lambda^4)^{-C+\epsilon},
\quad
C = \frac{\log(\pi/2)}{\log 2},
\]
for all arithmetic radii $n\in\Ncal_{z_q}\cap \IndexSet_q^\flat(X)$
outside of a set of cardinality
\[
\ll \frac{X}{(\log X)^{1+\frac{1}{3}\epsilon^2}} + \frac{X(\log\log X)}{(\log X)^{1+\epsilon-\epsilon\log\frac{\pi}{2}}},
\]
which proves Theorem \ref{intro:thm1} in the coprime case.

When $n\in\EuScript{B}_K^\sharp$, assuming $q$ is odd,
then $(n+2\lambda^2,n-2\lambda^2)=q$. Therefore, by Lemma \ref{lemma:S4.3} $(iv)$,
we can write
\begin{equation}\label{0905:eq001}
v_k(n^2-4\lambda^4)=v_k\left(\frac{n+2\lambda^2}{q}\right)v_k\left(\frac{n-2\lambda^2}{q}\right),
\end{equation}
where the two arguments are now coprime and one can apply the same proof as above.
The case $q$ even is analogous, except that in \eqref{0905:eq001}
one divides by an appropriate power of $2$.

%% BIBLIOGRAPHY %%

\end{document}